\def\mystyle{}

\if\mystyle l
\documentclass[landscape]{amsart}
\else
\documentclass{amsart}
\fi

\usepackage{begnac}
\usepackage[arrow,curve]{xy}
\everyxy={0;<1cm,0cm>:<0cm,-1cm>::}

\DeclareMathOperator{\Age}{Age}
\DeclareMathOperator{\Stx}{Stx}
\DeclareMathOperator{\Apx}{Apx}

\newcommand{\clc}[1]{\overline{#1^\uparrow}}

\title{Fraïssé limits of metric structures}

\author{Itaï \textsc{Ben Yaacov}}

\address{Itaï \textsc{Ben Yaacov} \\
  Université Claude Bernard -- Lyon 1 \\
  Institut Camille Jordan, CNRS UMR 5208 \\
  43 boulevard du 11 novembre 1918 \\
  69622 Villeurbanne Cedex \\
  France}

\urladdr{\url{http://math.univ-lyon1.fr/~begnac/}}

\thanks{Research supported by the Institut Universitaire de France and ANR project GruPoLoCo (ANR-11-JS01-008).}

\thanks{The author wishes to thank Julien \textsc{Melleray} for many useful discussions, and Todor \textsc{Tsankov} for pushing him to write this paper.}

\svnInfo $Id: Fraisse.tex 1901 2014-09-02 13:09:14Z begnac $
\thanks{\textit{Revision} {\svnInfoRevision} \textit{of} \today}

\keywords{Fraïssé class, Fraïssé limit, metric structure, ultra-homogeneous structure, approximate isometry, Katětov function, Gurarij space}
\subjclass[2010]{03C30,03C52}

\begin{document}

\begin{abstract}
  We develop \emph{Fraïssé theory}, namely the theory of \emph{Fraïssé classes} and \emph{Fraïssé limits}, in the context of metric structures.
  We show that a class of finitely generated structures is Fraïssé if and only if it is the age of a separable approximately homogeneous structure, and conversely, that this structure is necessarily the unique limit of the class, and is universal for it.

  We do this in a somewhat new approach, in which ``finite maps up to errors'' are coded by \emph{approximate isometries}.
\end{abstract}

\maketitle

\section*{Introduction}

The notions of Fraïssé classes and Fraïssé limits were originally introduced by Roland \textsc{Fraïssé} \cite{Fraisse:ExtensionAuxRelations}, as a method to construct countable homogeneous (discrete) structures:
\begin{enumerate}
\item Every Fraïssé class $\cK$ has a Fraïssé limit, which is unique (up to isomorphism).
  The limit is countable and ultra-homogeneous (or, in more model-theoretic terminology, quantifier-free-homogeneous).
\item Conversely, every countable ultra-homogeneous structure is the limit of a Fraïssé class, namely, its \emph{age}.
\end{enumerate}
Moreover, the limit is universal for countable $\cK$-structures, namely for countable structures whose age is contained in $\cK$.

Similar results hold for metric structures as well.
Indeed, some general theory of this form is discussed in the PhD dissertation of Schoretsanitis \cite{Schoretsanitis:PhD}.
Independently, Kubiś and Solecki \cite{Kubis-Solecki:GurariiUniqueness} treated the special case of the class of finite dimensional Banach spaces, essentially showing that their Fraïssé limit is the Gurarij space, which is therefore unique and universal, without ever actually uttering the phrase ``Fraïssé limit'' (and in a fashion which is very specific to Banach spaces).
This multitude of somewhat incompatible approaches, reinforced by considerable nagging from Todor Tsankov convinced the author of the potential usefulness of the present paper.

There is one main novelty in the present treatment, compared with earlier treatments of back-and-forth arguments in the metric setting, in that we replace partial maps with \emph{approximate isometries} (which is just a fancy term for bi-Katětov maps).
These allow us to code in a single, hopefully natural, object, notions such as a partial isometry between metric spaces, or even a ``partial isometry only known up to some error term $\varepsilon > 0$''.
On a technical level, approximate isometries are easier to manipulate than, say, partial isometries, and can be freely composed without loss of information.
More importantly, their use simplifies arguments and dispenses with the need for several limit constructions at several crucial points:
\begin{itemize}
\item In the back-and-forth argument.
  The reader is invited to compare the proof of \autoref{thm:FraisseLimitBackAndForth}, which is hardly distinguishable from the argument for discrete structures, with ``traditional'' arguments for metric structures, involving the construction of partial isomorphisms which only extend each other up to some error, as in the proofs of Facts 1.4 and 1.5 of \cite{BenYaacov-Usvyatsov:dFiniteness}.
\item When checking that a structure is a Fraïssé limit, e.g., when proving that such exists, or when proving that the Gurarij space is the limit of finite-dimensional Banach spaces (\autoref{thm:GurarijExistsUnique}).
  Indeed, approximate isometries allow us to define a Fraïssé limit in a manner which is formally weaker than the ``traditional approach'' definition (namely \autoref{cor:FraisseLimitMap}\ref{item:FraisseLimitMapDefinition}).
  The limit constructions required to pass from the weaker definition to the stronger one are then entirely subsumed in the back-and-forth argument referred to above.
\end{itemize}

Of course, some preliminary work is required in order to develop these tools.
However, once this is done, many arguments in metric model theory, not only those present here, can be simplified significantly, so we consider this is worth the effort.
In addition, approximate isometries are essential for a generalisation of metric Fraïssé theory, to appear in a subsequent paper, in which the limit is only unique up to arbitrarily small error (e.g., a Banach space which is almost isometrically unique).

\section{Approximate isometries}
\label{sec:ApproximateIsometry}

Finite partial isomorphisms between structures play a crucial role in classical Fraïssé theory.
For example, homogeneity and uniqueness of the Fraïssé limit are proved using a back-and-forth argument, in which finite partial maps serve as better and better approximations for a desired global bijection.
In the metric setting, one may expect finite partial isometries to play a similar role, coding partial information regarding a desired global isometry.
However, this analogy fails, essentially on the grounds that whereas finite maps define neighbourhoods of global bijections (in the topology of point-wise convergence), finite isometries do not define neighbourhoods of global isometries.
In order to define an open set of isometries we need to restrict to a finite set \emph{and allow for a small error}: if $g\colon X \dashrightarrow X$ is a finite partial isometry and $\varepsilon > 0$, then $\bigl\{h \in \Iso(X) : hx \in B(gx,\varepsilon) \text{ for all } x \in \dom g \bigr\}$ is open and such sets form a basis for the point-wise convergence topology on $\Iso(X)$.

Another deficiency of partial isometries arises when considering compositions.
Say $f\colon X \dashrightarrow Y$ and $g\colon Y \dashrightarrow Z$ are partial isometries, such that $\img f \cap \dom g = \emptyset$, and say $x \in \dom f$ is such that $fx$ is very close to some $y \in \dom g$.
Then we should like to say that $gfx$ is very close to $gy$, but the composition $gf$ is empty and cannot code this information.

In order to remedy either problem we require a more flexible object than a partial isometry, which can say where an element goes, more or less, without having to say exactly where.
These objects will serve us mostly as approximations of actual isometries, whence their name.
The reader may wish to compare with the treatment of bi-Katětov functions in Uspenskij \cite{Uspenskij:SubgroupsOfMinimalTopologicalGroups}.

\begin{dfn}
  \label{dfn:ApproximateIsometry}
  Let $X$, $Y$ and $Z$ denote metric spaces.
  \begin{enumerate}
  \item We say that a function $\psi\colon X \rightarrow [0,\infty]$ is \emph{Katětov} if for all $x,y \in X$ we have $\psi(x) \leq d(x,y) + \psi(y)$ and $d(x,y) \leq \psi(x) + \psi(y)$.
    Unlike Uspenskij (and Katětov) we allow the value $\infty$, observing that a Katětov function is either finite or constantly $\infty$.
  \item
    We say that $\psi\colon X \times Y \rightarrow [0,\infty]$ is an \emph{approximate isometry} from $X$ to $Y$, and write $\psi\colon X \rightsquigarrow Y$, if it is bi-Katětov, i.e., separately Katětov in each argument.
    The special case $\psi = \infty$ is called the \emph{empty approximate isometry}.
  \item
    Given any $\psi\colon X \times Y \rightarrow [0,\infty]$ and $\varphi\colon Y \times Z \rightarrow [0,\infty]$ we define a \emph{composition} $\varphi\psi\colon X \times Z \rightarrow [0,\infty]$ and a \emph{pseudo-inverse} $\psi^*\colon Y \times X \rightarrow [0,\infty]$ by
    \begin{gather*}
      \varphi\psi(x,z) = \inf_{y \in Y} \, \psi(x,y) + \varphi(y,z), \qquad \psi^*(y,x) = \psi(x,y).
    \end{gather*}
  \end{enumerate}
\end{dfn}

An approximate isometry $\psi\colon X \rightsquigarrow Y$ is meant to provide partial information regarding some isometry, possibly between larger spaces.
We shall understand $\psi$ as saying that $x$ must be sent within $\psi(x,y)$ of $y$, so an isometry $f$ is considered to satisfy the constraints prescribed by $\psi$ if $\psi(x,y) \geq d(fx,y)$ for all $x,y$, i.e., if $f = \psi_f \leq \psi$ in the sense of \autoref{dfn:ApproximateIsometryPartialIsometry} below.
Accordingly, another $\varphi \colon X \rightsquigarrow Y$ imposes stronger constraints if and only if $\psi \geq \varphi$.
The rest of our terminology (coarsening, refinement, etc.) should be understand in the context of this interpretation.

\begin{rmk}
  \label{rmk:ApproximateIsometryAmalgamation}
  Let $\psi\colon X \times Y \rightarrow [0,\infty)$ be given, let $Z = X \amalg Y$, and define $d_Z$ extending $d_X$ and $d_Y$ by $d(x,y) = d(y,x) = \psi(x,y)$.
  Then $\psi$ is bi-Katětov (i.e., an approximate isometry) if and only if $d$ is a pseudo-distance on $Z$.
  The reader is advised that, while this interpretation is close to Katětov's original use for such functions, it is quite distant from our intended use, and may therefore be misleading.
\end{rmk}

\begin{lem}
  \label{lem:ApproximateIsometryOperations}
  \begin{enumerate}
  \item The composition and pseudo-inverse of approximate isometries are again approximate isometries.
  \item Composition is associative, and pseudo-inversion acts as an involution: $\psi^{**} = \psi$, $(\varphi\psi)^* = \psi^* \varphi^*$.
  \end{enumerate}
\end{lem}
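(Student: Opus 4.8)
The plan is to dispose of the structural identities in part (2) first, since they are formal, and then concentrate on the single substantive point, namely that a composition remains bi-Katětov. For the pseudo-inverse there is essentially nothing to verify: $\psi^*$ merely interchanges the two arguments of $\psi$, so ``Katětov in the first variable'' and ``Katětov in the second variable'' are swapped, and $\psi^*$ is again separately Katětov in each argument; the identity $\psi^{**} = \psi$ is immediate. The equality $(\varphi\psi)^* = \psi^*\varphi^*$ I would obtain by unwinding both sides to $\inf_{y}\,\psi(x,y) + \varphi(y,z)$, read as a function of $(z,x)$. Associativity I would prove by expanding both $(\chi\varphi)\psi$ and $\chi(\varphi\psi)$ to the common double infimum $\inf_{y,z}\,\psi(x,y) + \varphi(y,z) + \chi(z,w)$, which needs only that the infimum of a sum over two independent variables may be computed iteratively in either order.

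The real content is that $\varphi\psi\colon X \rightsquigarrow Z$ is bi-Katětov when $\psi\colon X \rightsquigarrow Y$ and $\varphi\colon Y \rightsquigarrow Z$ are. I would fix $z \in Z$ and check that $x \mapsto \varphi\psi(x,z)$ is Katětov, which splits into two inequalities. The Lipschitz inequality $\varphi\psi(x,z) \leq d(x,x') + \varphi\psi(x',z)$ is easy: add $d(x,x')$ to the estimate $\psi(x,y) \leq d(x,x') + \psi(x',y)$ and take the infimum over $y$. The triangle inequality $d(x,x') \leq \varphi\psi(x,z) + \varphi\psi(x',z)$ is the delicate point and the one place where all the hypotheses enter together, precisely because the two infima on the right are realised through possibly \emph{different} intermediate points, so the triangle inequality for $\psi$ cannot be applied directly.

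To handle it I would chain three of the defining inequalities. Starting from the triangle inequality for $\psi$ in its first argument, $d(x,x') \leq \psi(x,y) + \psi(x',y)$, I would route $\psi(x',y)$ to $\psi(x',y')$ by the Lipschitz property of $\psi$ in its \emph{second} argument, $\psi(x',y) \leq d(y,y') + \psi(x',y')$, and then pay for the detour $d(y,y')$ using the triangle inequality for $\varphi$ in its first argument, $d(y,y') \leq \varphi(y,z) + \varphi(y',z)$. Regrouping gives $d(x,x') \leq \bigl(\psi(x,y) + \varphi(y,z)\bigr) + \bigl(\psi(x',y') + \varphi(y',z)\bigr)$ for all $y,y'$, and taking the infimum over $y$ and $y'$ independently yields exactly $d(x,x') \leq \varphi\psi(x,z) + \varphi\psi(x',z)$. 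This is the main obstacle, since one must genuinely let the two sides pass through distinct intermediate points and absorb the discrepancy through $\varphi$.

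Finally, rather than repeat a symmetric computation in the second variable, I would invoke the identities already established: $\varphi\psi$ is Katětov in its second argument if and only if $(\varphi\psi)^* = \psi^*\varphi^*$ is Katětov in its first argument, and since $\psi^*$ and $\varphi^*$ are approximate isometries, $\psi^*\varphi^*$ is a composition to which the first-variable argument above applies verbatim. Throughout one should keep in mind the arithmetic conventions in $[0,\infty]$ and the degenerate case $Y = \emptyset$, where the composition collapses to the empty approximate isometry $\infty$; these cause no difficulty.
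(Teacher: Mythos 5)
Your proof is correct and takes essentially the same approach as the paper: the Lipschitz half of the Katětov condition comes from an infimum of $1$-Lipschitz functions, and the triangle half from exactly the same chain (triangle inequality for one factor, Lipschitz estimate for the other, triangle inequality again) through two \emph{independent} intermediate points, followed by independent infima. Your use of the involution $(\varphi\psi)^* = \psi^*\varphi^*$ to dispatch the second variable is merely a tidier packaging of the symmetry that the paper leaves implicit.
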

\begin{proof}
  Let $\varphi\colon X \rightsquigarrow Y$ and $\psi\colon Y \rightsquigarrow Z$.
  Then for each $x \in X$ and $y \in Y$, the function $z \mapsto \varphi(x,y) + \psi(y,z)$ is Lipschitz with constant $1$, and therefore so is $z \mapsto \psi \varphi(x,z) = \inf_y \, \varphi(x,y) + \psi(y,z)$ for any fixed $x$.
  Similarly, for any $x \in X$, $y,y' \in Y$ and $z,z' \in Z$ we have
  \begin{gather*}
    \varphi(x,y) + \psi(y,z) + \varphi(x,y') + \psi(y',z') \geq d(y,y') + \psi(y,z) + \psi(y',z') \geq \psi(\psi',z) + \psi(y',z) \geq d(z,z'),
  \end{gather*}
  whence $\psi\varphi(x,z) + \psi\psi(x,z') \geq d(z,z')$.
  Therefore $\psi\varphi$ is an approximate isometry, and it is clear that$\varphi^*$ is one as well.
  The second item is even easier and is left to the reader.
\end{proof}

The first examples we give of approximate isometries are simply partial isometries, viewed as instances of the former (modulo some obvious identifications: a partial isometry and its restriction to a dense subset of its domain carry the same information, and indeed induce the same approximate isometry).

\begin{dfn}
  \label{dfn:ApproximateIsometryPartialIsometry}
  Let $X$ and $Y$ denote metric spaces.
  \begin{enumerate}
  \item
    To a partial isometry $f\colon X \dashrightarrow Y$ we associate an approximate isometry $\psi_f(x,y) = \inf_{z \in \dom f} d(x,z) + d(fz,y)$.
    We shall mostly ignore the distinction between $f$ and $\psi_f$, denoting the latter by $f$ as well.
  \item
    Let $i\colon X \subseteq X'$, $j\colon Y \subseteq Y'$ isometric embeddings, and let $\psi\colon X \rightsquigarrow Y$.
    Then $j\psi i^*\colon X' \rightsquigarrow Y'$ is called the \emph{trivial extension} of $\psi$ to $X' \rightsquigarrow Y'$.
    When there is no risk of ambiguity, we shall identify an approximate isometry with its trivial extension to any pair of larger spaces.
  \end{enumerate}
\end{dfn}

\begin{lem}
  \label{lem:ApproximateIsometryPartialIsometry}
  \begin{enumerate}
  \item If $f$ is a partial isometry, then the corresponding $\psi_f$ is an approximate isometry.
  \item The approximate isometry $\infty = \psi_\emptyset$ is destructive for composition, and $\id_X$, identified with $\psi_{\id_X} = d_X$, is neutral.
  \item (Pseudo-)inversion is compatible with the identification of partial isometries with approximate ones.
    Similarly for composition $\psi_g\psi_f = \psi_{gf}$ when $\dom g \supseteq \img f$ or $\dom g \subseteq \img f$, and for the natural notion of trivial extension of a partial map to larger sets.
  \end{enumerate}
\end{lem}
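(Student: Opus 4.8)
The plan is to verify each clause by unwinding the relevant infima and appealing to the triangle inequality together with the fact that $f$ (and later $g$) are isometries on their domains. For item (1), fix $y \in Y$ and consider $x \mapsto \psi_f(x,y) = \inf_{z \in \dom f} d(x,z) + d(fz,y)$. Each summand $x \mapsto d(x,z) + d(fz,y)$ is $1$-Lipschitz, hence so is the infimum, and this is exactly the first Katětov inequality $\psi_f(x,y) \le d(x,x') + \psi_f(x',y)$; the same argument with $x$ fixed handles the $y$-coordinate. For the second Katětov inequality $d(x,x') \le \psi_f(x,y) + \psi_f(x',y)$ I would choose $z,z' \in \dom f$ nearly attaining the two infima and estimate $d(x,z) + d(fz,y) + d(x',z') + d(fz',y) \ge d(x,z) + d(x',z') + d(fz,fz') = d(x,z) + d(x',z') + d(z,z') \ge d(x,x')$, the crucial equality $d(fz,fz') = d(z,z')$ being precisely where ``isometry'' enters; the mixed inequality $d(y,y') \le \psi_f(x,y) + \psi_f(x,y')$ is symmetric.

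For item (2), $\psi_\emptyset$ is an infimum over the empty set, hence $\infty$, and since $a + \infty = \infty$ one sees at once that $\infty$ annihilates any composition on either side. For the identity I would compute $\psi_{\id_X}(x,y) = \inf_{z \in X} d(x,z) + d(z,y)$, which equals $d(x,y)$ by the triangle inequality (the value $z = x$ attaining it), identifying $\psi_{\id_X}$ with $d_X$. Neutrality of $d_X$ is then just the first Katětov inequality read as $\inf_{x'} d(x,x') + \psi(x',y) = \psi(x,y)$, and symmetrically on the other side.

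Item (3) is where the real work lies. Pseudo-inversion is immediate: the substitution $w = fz$ turns $\psi_f^*(y,x) = \inf_{z} d(x,z) + d(fz,y)$ into $\psi_{f^{-1}}(y,x) = \inf_w d(y,w) + d(f^{-1}w, x)$. For composition I would first collapse the intermediate infimum: expanding $\psi_g\psi_f(x,z) = \inf_y \psi_f(x,y) + \psi_g(y,z)$ and using $\inf_y d(fw,y) + d(y,v) = d(fw,v)$ reduces it to $\inf_{w \in \dom f,\, v \in \dom g} d(x,w) + d(fw,v) + d(gv,z)$. When $\dom g \supseteq \img f$ the choice $v = fw$ is admissible and yields $\psi_g\psi_f \le \psi_{gf}$; conversely, since $v, fw \in \dom g$ and $g$ is an isometry there, $d(gv,z) \ge d(gfw,z) - d(v,fw)$, so each term is $\ge d(x,w) + d(gfw,z) \ge \psi_{gf}(x,z)$. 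The case $\dom g \subseteq \img f$ I would not treat directly but deduce from the first case by pseudo-inversion, using $(\varphi\psi)^* = \psi^*\varphi^*$ from \autoref{lem:ApproximateIsometryOperations} and $(gf)^{-1} = f^{-1}g^{-1}$: the hypothesis then reads $\dom f^{-1} \supseteq \img g^{-1}$, to which the first case applies.

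For the trivial extension I would expand $(j\psi_f i^*)(x',y') = \inf_{x \in X,\, y \in Y} d(x',x) + \psi_f(x,y) + d(y,y')$ and collapse the two outer infima, using $w \in X$ and $fw \in Y$, to recover $\inf_{w} d(x',w) + d(fw,y')$, which is the value of $\psi_f$ computed in the larger spaces. The main obstacle is the composition identity: keeping track of which domain condition licenses the substitution $v = fw$, and recognizing that the reverse inequality requires $g$ to be a genuine isometry precisely on $\{v, fw\} \subseteq \dom g$. Reducing the second domain case to the first via pseudo-inversion is the cleanest way to avoid repeating this bookkeeping.
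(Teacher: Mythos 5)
Your proof is correct: each computation checks out, including the collapse of the intermediate infimum in the composition identity and the clean reduction of the case $\dom g \subseteq \img f$ to the case $\dom g \supseteq \img f$ via pseudo-inversion and $(\varphi\psi)^* = \psi^*\varphi^*$. The paper's own proof is literally ``Left to the reader,'' and your direct unwinding of the defining infima, with the isometry property of $f$ and $g$ entering exactly where you flag it, is precisely the routine verification the paper intends.
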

\begin{proof}
  Left to the reader.
\end{proof}

This indeed solves both problems described in the beginning of the section.
If $g\colon X \dashrightarrow X$ is a finite partial isometry and $\varepsilon > 0$ then the approximate isometry $g + \varepsilon$ codes ``$g$ up to error $\varepsilon$'', and $\bigl\{h \in \Iso(X) : hx \in B(gx,\varepsilon) \text{ for all } x \in \dom g \bigr\}$ is just $\Iso(X) \cap \Apx^{<g+\varepsilon}(X)$ in the sense of \autoref{dfn:ApproximateIsometrySpace} below.
Similarly, in the situation of composition of partial isometries, if $x \in \dom f$ and $y \in \dom g$ then $\psi_g \psi_f$ prescribes that $x$ be sent no more than $(\psi_g \psi_f)(x,gy) = d(fx,y)$ from $gy$, which is exactly the information we wanted to keep.

\begin{dfn}
  \label{dfn:ApproximateIsometrySpace}
  Let $X$, $Y$ and $Z$ denote metric spaces.
  \begin{enumerate}
  \item
    The space of all approximate isometries from $X$ to $Y$ will be denoted $\Apx(X,Y)$, and equipped with the topology induced from $[0,\infty]^{X \times Y}$.
    When $X = Y$ we let $\Apx(X) = \Apx(X,X)$.
  \item
    For $\psi,\varphi \in \Apx(X,Y)$ we say that $\varphi \leq \psi$ is the comparison holds point-wise, i.e., $\varphi(x,y) \leq \psi(x,y)$ for all $(x,y) \in X \times Y$.
    We then also say that $\psi$ \emph{coarsens} $\varphi$, or that $\varphi$ \emph{refines} $\psi$.
    We define $\Apx^{\leq \psi}(X,Y) = \bigl\{ \varphi \in \Apx(X,Y) : \varphi \leq \psi \bigr\}$.
  \item
    We define $\Apx^{<\psi}(X,Y)$ as the interior of $\Apx^{\leq \psi}(X,Y)$ in $\Apx(X,Y)$.
    If $\varphi \in \Apx^{<\psi}(X,Y)$ we write $\varphi < \psi$ and say that $\psi$ \emph{strictly coarsens} $\varphi$, or that $\varphi$ \emph{strictly refines} $\psi$.
  \item
    For $\cA \subseteq \Apx(X,Y)$ we define its \emph{closure under coarsening} $\cA^\uparrow = \{ \psi \in \Apx(X,Y) : \exists \varphi \in \cA, \, \psi \geq \varphi\}$.
    We observe that $\clc{\cA} = (\clc{\cA})^\uparrow$.
  \end{enumerate}
\end{dfn}

\begin{lem}
  \label{lem:ApproximateIsometryTopology}
  \begin{enumerate}
  \item The space $\Apx(X,Y)$ is compact, and the interpretation of actual isometries as approximate isometries yields a topological embedding $\Iso(X) \subseteq \Apx(X)$.
  \item If $\varphi_\alpha \in \Apx(X,Y)$ is a net then $\limsup \varphi_\alpha$, calculated coordinate-wise in $[0,\infty]^{X \times Y}$, belongs to $\Apx(X,Y)$ as well.
  \item
    \label{item:ApproximateIsometryTopologyComposition}
    Composition is upper semi-continuous, in the sense that the set $\bigl\{ (\psi,\varphi) : \varphi\psi \in \cA \bigr\} \subseteq \Apx(X,Y) \times \Apx(Y,Z)$ is closed whenever $\cA = \clc{\cA} \subseteq \Apx(X,Z)$.
    Equivalently, $(\limsup \psi_\alpha)(\limsup \varphi_\alpha) \geq \limsup (\psi_\alpha \varphi_\alpha)$.
  \end{enumerate}
\end{lem}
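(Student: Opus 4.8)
The plan is to handle the three items in order, the first two being soft and the third carrying the real content. For \textbf{(1)}, I would first observe that $[0,\infty]$ is compact (homeomorphic to $[0,1]$), so $[0,\infty]^{X\times Y}$ is compact by Tychonoff, and it then suffices to show $\Apx(X,Y)$ is closed therein. But each bi-Katětov inequality, namely $\psi(x,y)\le d(x,x')+\psi(x',y)$ and $d(x,x')\le\psi(x,y)+\psi(x',y)$ together with their counterparts in the second variable, involves only finitely many coordinates, and the sets $\{(a,b,c):a\le b+c\}$ and $\{(a,b):a\le b\}$ are closed in the relevant finite powers of $[0,\infty]$ (addition being continuous there); hence $\Apx(X,Y)$ is an intersection of closed sets. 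For the embedding $\Iso(X)\subseteq\Apx(X)$, I would compute that for a total isometry $f$ the infimum defining $\psi_f$ is attained at $z=x$, so $\psi_f(x,y)=d(fx,y)$, and then check that under the topology of pointwise convergence on $\Iso(X)$ the map $f\mapsto\psi_f$ is injective, continuous (distance is continuous), and has continuous inverse: from $d(f_\alpha x,y)\to d(fx,y)$ one recovers $f_\alpha x\to fx$ by specialising to $y=fx$.

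For \textbf{(2)}, I write $\varphi=\limsup_\alpha\varphi_\alpha$ coordinate-wise and verify the two Katětov inequalities separately in each argument. The first, $\varphi(x,y)\le d(x,x')+\varphi(x',y)$, passes to the limit directly, since $\limsup$ is monotone and commutes with adding the constant $d(x,x')$. The second, $d(x,x')\le\varphi(x,y)+\varphi(x',y)$, uses that $d(x,x')$ is constant with $d(x,x')\le\varphi_\alpha(x,y)+\varphi_\alpha(x',y)$ for all $\alpha$, whence $d(x,x')\le\limsup_\alpha\bigl(\varphi_\alpha(x,y)+\varphi_\alpha(x',y)\bigr)\le\limsup_\alpha\varphi_\alpha(x,y)+\limsup_\alpha\varphi_\alpha(x',y)$ by subadditivity of $\limsup$. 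The second-variable conditions are identical, so $\varphi$ is bi-Katětov.

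For \textbf{(3)}, with $\psi_\alpha\colon X\rightsquigarrow Y$ and $\varphi_\alpha\colon Y\rightsquigarrow Z$, the heart is the pointwise inequality
\[
  (\limsup_\alpha\varphi_\alpha)(\limsup_\alpha\psi_\alpha)\ \ge\ \limsup_\alpha(\varphi_\alpha\psi_\alpha).
\]
I would prove it by fixing $x,z$ and an arbitrary $y_0\in Y$: since $(\varphi_\alpha\psi_\alpha)(x,z)\le\psi_\alpha(x,y_0)+\varphi_\alpha(y_0,z)$, applying $\limsup_\alpha$ and subadditivity gives $\limsup_\alpha(\varphi_\alpha\psi_\alpha)(x,z)\le\limsup_\alpha\psi_\alpha(x,y_0)+\limsup_\alpha\varphi_\alpha(y_0,z)$, and taking the infimum over $y_0$ on the right yields exactly the left-hand side. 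To read off closedness of $S=\{(\psi,\varphi):\varphi\psi\in\cA\}$ when $\cA=\clc{\cA}$, suppose $(\psi_\alpha,\varphi_\alpha)\to(\psi,\varphi)$ with $\varphi_\alpha\psi_\alpha\in\cA$; convergence gives $\psi=\limsup\psi_\alpha$ and $\varphi=\limsup\varphi_\alpha$, so the display yields $\varphi\psi\ge\limsup(\varphi_\alpha\psi_\alpha)$. A short lemma then places $\limsup(\varphi_\alpha\psi_\alpha)$ in $\cA$: the antitone net $g_\beta=\sup_{\alpha\ge\beta}\varphi_\alpha\psi_\alpha$ lies in $\cA$ because $g_\beta\ge\varphi_\beta\psi_\beta\in\cA$ and $\cA$ is closed under coarsening, while $g_\beta$ decreases to $\limsup(\varphi_\alpha\psi_\alpha)$, which is in $\Apx$ by \textbf{(2)} and hence in the closed set $\cA$. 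Since $\cA$ is an up-set and $\varphi\psi\ge\limsup(\varphi_\alpha\psi_\alpha)$, we conclude $\varphi\psi\in\cA$, so $S$ is closed; the displayed inequality is simply the net reformulation of this upper semi-continuity, and the equivalence of the two phrasings is routine.

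The main obstacle I anticipate is in \textbf{(3)}: getting the direction of the inequality right and seeing why composition is only \emph{upper} semi-continuous, that is, why both hypotheses on $\cA$, topologically closed \emph{and} closed under coarsening, are exactly what is needed. The point is that the infimum defining composition can only drop in the limit—one recovers a one-sided bound by testing a single $y_0$ and only afterwards optimising—so only coarsening-closed targets behave well, and the monotone-net argument returning $\limsup$ of a net in a closed up-set back into that up-set (leaning on \textbf{(2)} to know the limit is still an approximate isometry) is the technical glue.
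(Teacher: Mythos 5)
Correct, and essentially the paper's own argument: the paper merely asserts that item (2) and the $\limsup$ inequality ``follow easily from the definitions'' and that closedness follows from these together with $\cA = \clc{\cA}$, which is exactly what you prove in detail (your net argument for the embedding $\Iso(X) \subseteq \Apx(X)$ replaces the paper's equivalent comparison of sub-basic open sets, and your monotone-net argument is the natural filling-in of the paper's last sentence). The only step you leave implicit is that placing $g_\beta = \sup_{\alpha \ge \beta} \varphi_\alpha \psi_\alpha$ into $\cA$ via closure under coarsening requires $g_\beta \in \Apx(X,Z)$, since coarsening is a relation inside $\Apx(X,Z)$; this is immediate because a pointwise supremum of bi-Katětov functions is again bi-Katětov.
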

\begin{proof}
  The space $\Apx(X,Y)$ is closed in $[0,\infty]^{X \times Y}$ and therefore compact.
  A sub-basic open set $U_{x,y,\varepsilon} = \{g : d(gx,y) < \varepsilon\} \subseteq \Iso(X)$ agrees with $\{\varphi : \varphi(x,y) < \varepsilon\} \cap \Iso(X)$ under $\Iso(X) \subseteq \Apx(X)$.
  Conversely, if $V = \{\varphi : r < \varphi(x,y) < s\} \subseteq \Apx(X)$ and $f \in \Iso(X) \cap V$ then we may assume that $r + \varepsilon < d(fx,y) < s-\varepsilon$ in which case $f \in U_{x,fx,\varepsilon} \subseteq V$.
  This proves the first item.
  That $\limsup \varphi_\alpha$ is also an approximate isometry, and that $(\limsup \psi_\alpha)(\limsup \varphi_\alpha) \geq \limsup (\psi_\alpha \varphi_\alpha)$, follow easily from the definitions.
  The latter, together with $\cA = \clc{\cA}$, implies that $\bigl\{ (\psi,\varphi) : \varphi\psi \in \cA \bigr\}$ is closed.
\end{proof}

\begin{lem}
  \label{lem:ApproximateIsometryStrictCoarsening}
  Let $X$, $Y$ and $Z$ be metric spaces.
  \begin{enumerate}
  \item
    Let $\psi \in U \subseteq \Apx(X,Y)$, with $U$ a neighbourhood of $\psi$.
    Then there exists $\varphi \in U$ such that $\psi < \varphi$.
    In particular, if $\psi < \varphi$ in $\Apx(X,Y)$ and $V \ni \psi$ is open then there exists $\rho \in \Apx(X,Y) \cap V$ such that $\psi < \rho < \varphi$.
  \item
    \label{item:ApproximateIsometryStrictCoarseningCriterion}
    Let $\varphi,\psi \in \Apx(X,Y)$.
    Then $\varphi > \psi$ if and only if there are finite $X_0 \subseteq X$, $Y_0 \subseteq Y$ and $\varepsilon > 0$ such that $\varphi \geq \psi\rest_{X_0 \times Y_0} + \varepsilon$.
    Moreover, in this case there exists $\rho \in \Apx(X_0,Y_0)$ which only takes rational values (on $X_0 \times Y_0$) such that $\psi < \rho < \varphi$.
  \end{enumerate}
\end{lem}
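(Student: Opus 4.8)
The plan is to isolate a single ``Lipschitz propagation'' estimate and derive everything from it, so that all the topological claims reduce to manipulations with finitely many coordinates. Throughout I identify an approximate isometry on $X_0\times Y_0$ with its trivial extension to $X\times Y$ (\autoref{dfn:ApproximateIsometryPartialIsometry}), so that $\psi\rest_{X_0\times Y_0}$ denotes the element of $\Apx(X,Y)$ obtained by restricting $\psi$ to the finite product $X_0\times Y_0$ and trivially extending; by \autoref{lem:ApproximateIsometryOperations} this is again an approximate isometry, it agrees with $\psi$ on $X_0\times Y_0$ (the infimum defining the extension is attained diagonally, using that $\psi$ is bi-Katětov), and it dominates $\psi$ elsewhere. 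The degenerate case is disposed of first: since a bi-Katětov function is either finite everywhere or constantly $\infty$, if $\psi=\infty$ then $\psi<\varphi$ forces $\varphi=\infty$ and both sides are trivial, and $\varphi=\infty$ makes $\Apx^{\leq\varphi}$ the whole (open) space; so from now on I assume all functions in sight are finite.

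The estimate is obtained by iterating the two Katětov inequalities: any $\rho\in\Apx(X,Y)$ satisfies, for every $(x,y)$ and every $x_0\in X_0$, $y_0\in Y_0$,
\begin{gather*}
  \rho(x,y) \leq d(x,x_0) + \rho(x_0,y) \leq d(x,x_0) + d(y,y_0) + \rho(x_0,y_0),
\end{gather*}
so that, taking the infimum over $x_0,y_0$, the value $\rho(x,y)$ is bounded by $(\rho\rest_{X_0\times Y_0})(x,y)$, i.e. $\rho$ is controlled on all of $X\times Y$ by its values on the finite set $X_0\times Y_0$. From this I get the heart of the matter, the backward direction of \ref{item:ApproximateIsometryStrictCoarseningCriterion}: if $\varphi\geq \psi\rest_{X_0\times Y_0}+\varepsilon$, then the basic open set $N=\{\rho\in\Apx(X,Y):\rho(x_0,y_0)<\psi(x_0,y_0)+\varepsilon\text{ for all }x_0\in X_0,\,y_0\in Y_0\}$ is a neighbourhood of $\psi$, and every $\rho\in N$ satisfies $\rho(x,y)\leq (\psi\rest_{X_0\times Y_0})(x,y)+\varepsilon\leq\varphi(x,y)$ by the estimate; hence $N\subseteq\Apx^{\leq\varphi}$ and $\psi<\varphi$.

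Everything else falls out formally. For the first statement of part (1), shrink the given neighbourhood $U$ to a basic box cut out by finite $X_0,Y_0$ and $\delta>0$, and take $\varphi=\psi\rest_{X_0\times Y_0}+\delta/2$ (an approximate isometry, since adding a nonnegative constant preserves both Katětov inequalities): it equals $\psi+\delta/2$ on $X_0\times Y_0$ and so lies in $U$, and $\psi<\varphi$ by the backward direction just proved. The ``in particular'' clause is then immediate, as $\Apx^{<\varphi}$ is open and contains $\psi$: applying the first statement to the neighbourhood $V\cap\Apx^{<\varphi}$ yields $\rho$ with $\psi<\rho$ and $\rho\in\Apx^{<\varphi}$, that is $\psi<\rho<\varphi$ with $\rho\in V$. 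The forward direction of \ref{item:ApproximateIsometryStrictCoarseningCriterion} is the same test-function trick: given a basic neighbourhood $N_0\subseteq\Apx^{\leq\varphi}$ of $\psi$ cut out by $X_0,Y_0,\delta$, the approximate isometry $\psi\rest_{X_0\times Y_0}+\delta/2$ lies in $N_0$, hence is $\leq\varphi$, which is exactly $\varphi\geq\psi\rest_{X_0\times Y_0}+\delta/2$.

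The one genuinely delicate point is the ``moreover'' clause, because naive rounding of the values of $\psi$ to rationals can destroy the Lipschitz inequalities $|\rho(x,y)-\rho(x',y)|\leq d(x,x')$, whose constants $d(x,x')$ may be irrational and may leave no slack. I would instead argue by density inside a finite-dimensional convex body. Writing $\sigma=\psi\rest_{X_0\times Y_0}\in\Apx(X_0,Y_0)$, two applications of the backward direction (with the finite sets $X_0,Y_0$) show that any $\rho\in\Apx(X_0,Y_0)$ with $\sigma(x_0,y_0)<\rho(x_0,y_0)<\sigma(x_0,y_0)+\varepsilon/2$ on $X_0\times Y_0$ already satisfies $\psi<\rho<\varphi$, so it suffices to produce a rational such $\rho$. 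Now $\Apx(X_0,Y_0)$ is a convex subset of the finite-dimensional space $[0,\infty)^{X_0\times Y_0}$ (an intersection of the slabs and half-spaces given by the Katětov inequalities), and a sufficiently large constant function is an interior point of it, every inequality relating distinct points being strict there. Taking the open segment from $\sigma+\varepsilon/4$ to such an interior point gives a point $\rho_s$ in the interior of $\Apx(X_0,Y_0)$ that, for $s$ small, still lies in the open box $\sigma<\rho<\sigma+\varepsilon/2$; a whole neighbourhood of $\rho_s$ in $[0,\infty)^{X_0\times Y_0}$ is then contained in $\Apx(X_0,Y_0)$ and in the box, so by density it contains a point with rational coordinates, which is the desired $\rho$. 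I expect this last step --- securing rational values while remaining bi-Katětov --- to be the main obstacle, and the convexity-plus-interior-point argument to be the cleanest way around it.
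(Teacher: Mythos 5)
Your proof is correct, and for the part the paper actually writes out it is the same argument: the propagation estimate (a bi-Kat\v{e}tov function is dominated everywhere by the trivial extension of its restriction to a finite rectangle) together with restriction-plus-a-constant test functions is precisely how the paper proves item (1); your choice to prove the backward direction of (2) first and then deduce (1) from it is only a reorganization. The difference is in the ``moreover'' clause, which the paper dismisses with ``the rest is easy'': you give it a full treatment, and rightly flag that naive rounding can fail when a Lipschitz constraint $|\rho(x,y)-\rho(x',y)|\leq d(x,x')$ is tight with $d(x,x')$ irrational. Your fix --- view the finite bi-Kat\v{e}tov functions on $X_0\times Y_0$ as a polyhedron in $[0,\infty)^{X_0\times Y_0}$, observe that a large constant function satisfies every nontrivial defining inequality strictly and is therefore an interior point, slide from $\sigma+\varepsilon/4$ a short way towards it to obtain an interior point still inside the open box $\sigma<\rho<\sigma+\varepsilon/2$, and pick a rational point in a whole neighbourhood of it --- is sound, and the two preliminary applications of the backward direction of (2) correctly convert that box condition into $\psi<\rho<\varphi$. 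This convexity step is a clean way to discharge the one part of the lemma the paper leaves entirely to the reader.
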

\begin{proof}
  For the first item, we may assume that there are finite sets $X_0 \subseteq X$, $Y_0 \subseteq Y$ and some $\varepsilon > 0$ such that $\varphi \in U$ if and only if $|\varphi(x,y) - \psi(x,y)| < 2\varepsilon$ on $X_0 \times Y_0$.
  Let $\psi_0 = \psi\rest_{X_0 \times Y_0} \in \Apx(X_0,Y_0)$, and let $\varphi = \psi_0 + \varepsilon \in \Apx(X_0,Y_0) \subseteq \Apx(X,Y)$.
  Let
  \begin{gather*}
    V = \{\varphi\colon \varphi(x,y) < \psi(x,y) + \varepsilon \text{ on } X_0 \times Y_0\}.
  \end{gather*}
  Then $\psi \in V \subseteq \Apx^{\leq \varphi}(X,Y)$, so $\psi < \varphi$.

  The rest is easy.
\end{proof}

\section{Metric Fraïssé limits via approximate maps}
\label{sec:Fraisse}

Let us start by fixing a few basic definitions.

\begin{dfn}
  Let $\cL$ be denote a collection of symbols, each being either a \emph{predicate symbol} or a \emph{function symbol} and each having an associated natural number called its \emph{arity}.
  An \emph{$\cL$-structure} $\fA$ consists of a complete metric space $A$, together with,
  \begin{itemize}
  \item For each $n$-ary predicate symbol $R$, a continuous interpretation $R^\fA\colon A^n \rightarrow \bR$.
    It will be convenient to consider the distance as a (distinguished) binary predicate symbol.
  \item For each $n$-ary function symbol $f$, a continuous interpretation $f^\fA\colon A^n \rightarrow A$.
    A zero-ary function is also called a \emph{constant}.
  \end{itemize}

  If $\fA$ is a structure and $A_0 \subseteq A$, then the smallest substructure of $\fA$ containing $A_0$ is denoted $\langle A_0 \rangle$, the substructure \emph{generated} by $A_0$.
  Its underlying set is just the metric closure of $A_0$ under the interpretations of function symbols.

  An \emph{embedding} of $\cL$-structures $\varphi\colon \fA \rightarrow \fB$ is a map which commutes with the interpretation of the language: $R^\fB(\varphi \bar a) = R^\fA(\bar a)$ and $f^\fB(\varphi \bar a) = \varphi f^\fA(\bar a)$ (in particular, $d^\fB(\varphi a,\varphi b) = d^\fA(a,b)$, so an embedding is always isometric).
  A \emph{partial isomorphism} $\varphi\colon \fA \dashrightarrow \fB$ is a map $\varphi\colon A_0 \rightarrow B$ where $A_0 \subseteq A$ and $\varphi$ extends (necessarily uniquely) to en embedding $\langle A_0 \rangle \rightarrow \fB$.
\end{dfn}

\begin{rmk}
  \label{rmk:Continuity}
  The definition given here is more relaxed than definitions given in more general treatments of continuous logic, such as \cite{BenYaacov-Usvyatsov:CFO,BenYaacov-Berenstein-Henson-Usvyatsov:NewtonMS} for the bounded case and \cite{BenYaacov:Unbounded} for the general (unbounded) case, in that we only require plain continuity (rather than uniform), and no kind of boundedness.
  Indeed, let us consider the following properties of a map $f\colon X \rightarrow Y$ between metric spaces, which imply one another from top to bottom:
  \begin{enumerate}
  \item The map $f$ is uniformly continuous.
  \item The map $f$ sends Cauchy sequences to Cauchy sequences (equivalently, $f$ admits a continuous extension to the completions, $\hat f \colon \widehat X \rightarrow \widehat Y$).
    Let us call this \emph{Cauchy continuity}.
  \item The map $f$ is continuous.
  \end{enumerate}
  If $X$ is complete then the last two properties coincide, if $X$ is totally bounded then the first two coincide, and if $X$ is compact then all three do.
  Thus Cauchy continuity is intimately connected with completeness.
  Similarly, uniform continuity is intimately related with compactness: on the one hand, compactness implies uniform continuity (assuming plain continuity), while on the other hand, uniform continuity of the language is a crucial ingredient in the proof of compactness for first order continuous logic (similarly, in unbounded logic, compactness below every bound corresponds to uniform continuity on bounded sets).

  In light of this, and since compactness will not intervene in any way in our treatment, plain continuity on complete spaces will suffice.
  In situations involving incomplete spaces we shall require Cauchy continuity.
\end{rmk}

\begin{dfn}
  \label{dfn:ApproximatelyUltraHomogeneous}
  We say that a separable structure $\fM$ is \emph{approximately ultra-homogeneous} if every finite partial isomorphism $\varphi\colon \fM \dashrightarrow \fM$ is arbitrarily close to the restriction of an automorphism of $\fM$: for every $\varepsilon > 0$ there exists $f \in \Aut(\fM)$ such that $d(\varphi a,fa) < \varepsilon$ for all $a \in \dom \varphi$.
  Equivalently, if $\clc{\Aut(\fM)} \subseteq \Apx(M)$ contains every (finite) partial isomorphism $\varphi\colon \fM \dashrightarrow \fM$.
\end{dfn}

\begin{dfn}
  \label{dfn:Age}
  The \emph{age} of an $\cL$-structure $\fA$, denoted $\Age(\fA)$, is the class of finitely generated structures which embed in $\fA$.
\end{dfn}

Metric Fraïssé theory deals with (ages of) approximately ultra-homogeneous separable structures.
One could, of course, say that a structure $\fM$ is (precisely, rather than approximately) ultra-homogeneous if every isomorphism of finitely generated substructures extends to an isomorphism, but this would make us lose important examples (e.g., the Gurarij space), and in any case it does not seem that a Fraïssé theory can be developed for this stronger notion.
It follows that, whereas classical Fraïssé theory deals with finite partial isomorphism (and their extensions to automorphisms), metric Fraïssé theory must deal with finite partial isomorphisms ``up to some error'', which is by no means a new phenomenon in metric model theory.

The standard approach so far in similar situations, say when carrying out back-and-forth arguments (see for example \cite[Facts 1.4 and 1.5]{BenYaacov-Usvyatsov:dFiniteness}), involves constructing a sequence of finite partial isomorphisms $f_n$ such that each $f_{n+1}$ only extends $f_n$ up to some allowable error $\varepsilon_n$, keeping $\sum \varepsilon_n$ small.
This involves a considerable amount of bookkeeping, limit constructions and other complications.
Replacing ``partial isometries up to error'' with approximate isometries, as suggested in \autoref{sec:ApproximateIsometry}, we manage to avoid these complications, and the metric Fraïssé theory follows quite effortlessly, in almost perfect analogy with its discrete counterpart.

\begin{dfn}
  \label{dfn:HP}
  Let $\cK$ be a class of finitely generated structures.
  \begin{enumerate}
  \item
    By a \emph{$\cK$-structure} we mean an $\cL$-structure $\fA$ such that $\Age(\fA) \subseteq \cK$.
  \item
    We say that $\cK$ has the \emph{HP (Hereditary Property)} if every member of $\cK$ is a $\cK$-structure.
  \item
    Assume that $\cK$ has HP.
    We say that $\cK$ has the \emph{NAP (Near Amalgamation Property)} if for every $\fA,\fB \in \cK$, finite partial isomorphism $f\colon \fA \dashrightarrow \fB$ and $\varepsilon > 0$ there are $\fC \in \cK$ and embeddings $g\colon \fA \rightarrow \fC$, $h\colon \fB \rightarrow \fC$ such that $d(ga,hfa) \leq \varepsilon$ for all $a \in \dom f$, or equivalently, such that (as approximate isometries) $f + \varepsilon \geq h^*g$.
  \end{enumerate}
\end{dfn}

Notice that an age always has HP, and if $\fM$ is approximately ultra-homogeneous then $\Age(\fM)$ has NAP as well.

\begin{dfn}
  \label{dfn:ApproximateIsomorphism}
  Let $\cK$ be a class of finitely generated structures with HP, and let $\fA$ and $\fB$ be $\cK$-structures.
  We define $\Apx_1(\fA,\fB)$ to be the set of all finite partial isomorphisms $f\colon \fA \dashrightarrow \fB$, and
  \begin{gather*}
    \Apx_{2,\cK}(\fA,\fB) = \{ fg : g \in \Apx_1(\fA,\fC) \text{ and } f \in \Apx_1(\fC,\fB) \text{ for some } \fC \in \cK\},
  \end{gather*}
  where composition is in the sense of approximate isometries.
  Notice that if we allowed $\fC$ to be an arbitrary $\cK$-structure we would obtain the same definition, since we can always replace $\fC$ with $\langle\img g \cup \dom f \rangle$.
  Finally, following \autoref{dfn:ApproximateIsometrySpace}, define
  \begin{gather*}
    \Apx_\cK(\fA,\fB) = \overline{\Apx_{2,\cK}(\fA,\fB)^\uparrow}.
  \end{gather*}
  Members of $\Apx_\cK(\fA,\fB)$ are called \emph{($\cK$-intrinsic) approximate isomorphisms}.
  When $\cK$ is clear from the context we usually drop it.

  For $\psi \in \Apx(\fA,\fB)$, we define $\Apx^{<\psi}(\fA,\fB) = \Apx(\fA,\fB) \cap \Apx^{<\psi}(A,B)$.
  We say that $\psi$ is a \emph{strictly approximate isomorphism} if $\Apx^{<\psi}(\fA,\fB) \neq \emptyset$, and let $\Stx(\fA,\fB)$ denote the collection of such $\psi$.
\end{dfn}

Intuitively, approximate isomorphisms are to partial isomorphisms (between members of $\cK$) as approximate isometries are to partial isometries, so in particular every member of $\Apx_1(\fA,\fB)$ should then be considered an approximate isomorphism.
The reason for taking the two-iterate is that $\Apx_1$ may ``miss'' some information: for example, it may happen that $\fA,\fB \in \cK$ are ``close'', as witnessed by some embeddings $\fA \rightarrow \fC$ and $\fB \rightarrow \fC$ with close images (which is captured by $\Apx_2$), even though they have no non-trivial common substructure (so $\Apx_1$ sees nothing).
We also require $\Apx(\fA,\fB)$ to be compact and closed under coarsening (as is $\Apx(A,B)$), whence the definition.
\emph{Strictly} approximate isomorphisms are analogous to \emph{finite} partial isomorphisms in the classical setting, in that they do not fix too much information, leaving an open set of possibilities (clearly, $\Apx^{<\psi}(\fA,\fB)$ contains the relative interior of $\Apx^{\leq \psi}(\fA,\fB)$ in $\Apx(\fA,\fB)$, and one can check that in fact, the two agree).

\begin{lem}
  \label{lem:ApproximateIsomorphism}
  Let $\cK$ be a class of finitely generated structures with HP.
  Let $\fA$ and $\fB$ be $\cK$-structures.
  Then
  \begin{enumerate}
  \item $\Apx(\fA,\fB) = \overline{\Stx(\fA,\fB)}$.
  \item Every partial isomorphism between $\fA$ and $\fB$ belongs to $\Apx(\fA,\fB)$ (see \autoref{rmk:FraisseClass} below for a converse of this).
  \item
    \label{item:ApproximateIsomorphismStrictOver2}
    If $\psi \in \Stx(\fA,\fB)$ then $\Apx_2^{<\psi}(\fA,\fB) \neq \emptyset$.
  \end{enumerate}
\end{lem}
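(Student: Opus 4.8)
Write $\Apx(\fA,\fB)$ for $\Apx_\cK(\fA,\fB)=\clc{\Apx_{2,\cK}(\fA,\fB)}$, and recall from \autoref{dfn:ApproximateIsometrySpace} that this set is closed and closed under coarsening. The single observation I would isolate first, and reuse throughout, is that $\Stx(\fA,\fB)\subseteq\Apx(\fA,\fB)$: if $\psi\in\Stx(\fA,\fB)$ then by definition some $\varphi\in\Apx^{<\psi}(\fA,\fB)\subseteq\Apx(\fA,\fB)$ satisfies $\varphi<\psi$, hence $\psi\geq\varphi$, and $\uparrow$-closure gives $\psi\in\Apx(\fA,\fB)$. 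Since $\Apx(\fA,\fB)$ is closed, this already yields $\overline{\Stx(\fA,\fB)}\subseteq\Apx(\fA,\fB)$, one half of the first item. For the reverse inclusion I would fix $\psi\in\Apx(\fA,\fB)$ and a neighbourhood $U$; by \autoref{lem:ApproximateIsometryStrictCoarsening} there is $\varphi\in U$ with $\psi<\varphi$. Then $\varphi\geq\psi$ forces $\varphi\in\Apx(\fA,\fB)$ (again by $\uparrow$-closure), while $\psi$ itself witnesses $\Apx^{<\varphi}(\fA,\fB)\neq\emptyset$, so $\varphi\in\Stx(\fA,\fB)\cap U$. Hence $\psi\in\overline{\Stx(\fA,\fB)}$ and the two sets coincide.

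For the second item I would first check that every \emph{finite} partial isomorphism $f\colon\fA\dashrightarrow\fB$ already lies in $\Apx_{2,\cK}(\fA,\fB)$. Writing $F=\dom f$, the substructure $\fC=\langle F\rangle$ belongs to $\Age(\fA)\subseteq\cK$ by HP (\autoref{dfn:HP}), and factoring $f$ as the inclusion $\fA\dashrightarrow\fC$ (identity on $F$) followed by $f\colon\fC\dashrightarrow\fB$ expresses $\psi_f$, via \autoref{lem:ApproximateIsometryPartialIsometry}, as a composition of two members of $\Apx_1$; thus $\psi_f\in\Apx_{2,\cK}(\fA,\fB)$. For a general partial isomorphism $\varphi$, its restrictions $\varphi\rest_F$ to finite $F\subseteq\dom\varphi$ are finite partial isomorphisms, and $\psi_{\varphi\rest_F}(x,y)=\inf_{z\in F}d(x,z)+d(\varphi z,y)$ decreases pointwise to $\psi_\varphi(x,y)$ as $F$ grows. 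Hence $\psi_\varphi$ is the net-limit (over the directed family of finite subsets) of elements of $\Apx_{2,\cK}(\fA,\fB)\subseteq\Apx_{2,\cK}(\fA,\fB)^\uparrow$, and therefore lies in $\overline{\Apx_{2,\cK}(\fA,\fB)^\uparrow}=\Apx(\fA,\fB)$.

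The third item is the one genuine point. Given $\psi\in\Stx(\fA,\fB)$, I would choose $\varphi\in\Apx^{<\psi}(\fA,\fB)=\Apx(\fA,\fB)\cap\Apx^{<\psi}(A,B)$. Since $\Apx^{<\psi}(A,B)$ is open and $\varphi\in\Apx(\fA,\fB)=\overline{\Apx_{2,\cK}(\fA,\fB)^\uparrow}$, there is $\chi\in\Apx_{2,\cK}(\fA,\fB)^\uparrow$ with $\chi\in\Apx^{<\psi}(A,B)$, say $\chi\geq\eta$ with $\eta\in\Apx_{2,\cK}(\fA,\fB)$. The content is that $\eta$ is itself a strict refinement of $\psi$: by the criterion in \autoref{lem:ApproximateIsometryStrictCoarsening} the relation $\psi>\chi$ provides finite $X_0,Y_0$ and $\varepsilon>0$ with $\psi\geq\chi\rest_{X_0\times Y_0}+\varepsilon$; since $\eta\leq\chi$ and restriction-followed-by-trivial-extension is monotone, $\psi\geq\eta\rest_{X_0\times Y_0}+\varepsilon$, whence $\psi>\eta$ by the same criterion. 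Thus $\eta\in\Apx_{2,\cK}(\fA,\fB)\cap\Apx^{<\psi}(A,B)=\Apx_2^{<\psi}(\fA,\fB)$, which is consequently nonempty.

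I expect the crux to be exactly this last manoeuvre — descending from a coarsening $\chi\in\Apx_{2,\cK}(\fA,\fB)^\uparrow$ to an honest member $\eta$ of $\Apx_{2,\cK}(\fA,\fB)$ \emph{without} losing strictness — and the finitary, manifestly monotone characterisation of strict coarsening in \autoref{lem:ApproximateIsometryStrictCoarsening} is precisely the tool that makes it go through. The other two items are softer: the first follows from $\uparrow$-closedness together with the strict-coarsening density statement, and the second from approximating an arbitrary partial isomorphism pointwise by its finite restrictions, each of which factors through a finitely generated substructure lying in $\cK$.
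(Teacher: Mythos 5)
Your proof is correct and follows essentially the same route as the paper's (which is much terser): strict coarsenings in every neighbourhood for item (1), factoring a finite partial isomorphism through the substructure generated by its domain (in $\cK$ by HP) and then taking limits of finite restrictions for item (2). For item (3), where the paper only says it ``follows from the definitions,'' your descent from a coarsening $\chi \in \Apx_{2,\cK}(\fA,\fB)^\uparrow$ to an actual member $\eta \in \Apx_{2,\cK}(\fA,\fB)$, using the finitary criterion of \autoref{lem:ApproximateIsometryStrictCoarsening}\ref{item:ApproximateIsometryStrictCoarseningCriterion} to preserve strictness, is precisely the argument the paper leaves implicit.
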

\begin{proof}
  First, let $\varphi \in \Apx(\fA,\fB)$.
  For finite $A_0 \subseteq A$ and $B_0 \subseteq B$ and for $\varepsilon > 0$ we have $\varphi\rest_{A_0 \times B_0} + \varepsilon > \varphi$ by \autoref{lem:ApproximateIsometryStrictCoarsening}\ref{item:ApproximateIsometryStrictCoarseningCriterion}, so $\varphi\rest_{A_0 \times B_0} + \varepsilon \in \Stx(\fA,\fB)$.
  It follows that $\varphi \in \overline{\Stx(\fA,\fB)}$, and the converse inclusion is clear.
  Similarly, every finite partial isomorphism is an approximate isomorphism.
  Taking limits, every partial isomorphism is an approximate isomorphism.
  The last item follows from the definitions.
\end{proof}

\begin{lem}
  \label{lem:NAP}
  Let $\cK$ be a class of finitely generated structures with HP and NAP.
  Then
  \begin{enumerate}
  \item For any $\fA,\fB \in \cK$, $\varphi \in \Apx_2(\fA,\fB)$ and $\varepsilon > 0$ there exist $\fC \in \cK$ and embeddings $f\colon \fA \rightarrow \fC$, $g\colon \fB \rightarrow \fC$ such that $g^* f < \varphi + \varepsilon$.
  \item \label{item:NAPStxAmalgamation}
    For any $\fA,\fB \in \cK$ and $\varphi \in \Stx(\fA,\fB)$ there exist $\fC \in \cK$ and embeddings $f\colon \fA \rightarrow \fC$, $g\colon \fB \rightarrow \fC$ such that $g^* f < \varphi$.
  \item \label{item:NAPStxToEmbedding}
    Let $\fA \in \cK$, let $\fB$ be a $\cK$-structure, and let $\varphi \in \Stx(\fA,\fB)$.
    Then there exists an extension $\fA \subseteq \fC \in \cK$ and a finite partial isomorphism $f\colon \fC \dashrightarrow \fB$ such that $f < \varphi$.
  \item The composition of any two (strictly) approximate isomorphisms between $\cK$-structures is one as well.
  \end{enumerate}
\end{lem}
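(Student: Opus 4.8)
The plan is to derive all four items from the Near Amalgamation Property (\autoref{dfn:HP}) by amalgamating the intermediate structures hidden inside elements of $\Apx_2$, and then to convert the ``up to $\varepsilon$'' bounds that NAP produces into the strict inequalities demanded, using the criterion \autoref{lem:ApproximateIsometryStrictCoarsening}\ref{item:ApproximateIsometryStrictCoarseningCriterion}. The one auxiliary fact I would record first is that an element of $\Apx_2$, being a composition of two finite partial isomorphisms, is \emph{finitely determined}: it equals the trivial extension of its restriction to a suitable finite $A_0 \times B_0$. This is exactly what upgrades a pointwise bound to a strict one: if $\varphi \in \Apx_2$ is finitely determined by $A_0 \times B_0$ and $g^*f \leq \varphi + \delta$ everywhere with $\delta < \varepsilon$, then the trivial extension of $(g^*f)\rest_{A_0 \times B_0}$ is bounded by $\varphi + \delta$, and the criterion yields $g^*f < \varphi + \varepsilon$.

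For item (1) I would write $\varphi = \varphi_2\varphi_1$ with $\varphi_1 \colon \fA \dashrightarrow \fD$, $\varphi_2 \colon \fD \dashrightarrow \fB$, $\fD \in \cK$, and amalgamate twice. First apply NAP to $\varphi_1$ to get $\fE \in \cK$ and embeddings $p \colon \fA \to \fE$, $q \colon \fD \to \fE$ with $q^*p \leq \varphi_1 + \delta$; the transported map $\varphi_2 q^* \colon \fE \dashrightarrow \fB$ is again a finite partial isomorphism, and a second application of NAP gives $\fC \in \cK$ and embeddings $r \colon \fE \to \fC$, $s \colon \fB \to \fC$ with $s^*r \leq \varphi_2 q^* + \delta'$. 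With $f = rp$ and $g = s$, associativity and monotonicity of composition (\autoref{lem:ApproximateIsometryOperations}) give
\begin{gather*}
  g^*f = (s^*r)p \leq \varphi_2(q^*p) + \delta' \leq \varphi_2\varphi_1 + \delta + \delta' = \varphi + \delta + \delta',
\end{gather*}
so choosing $\delta + \delta' < \varepsilon$ and invoking the opening remark (as $\varphi \in \Apx_2$ is finitely determined) yields $g^*f < \varphi + \varepsilon$.

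Items (2) and (3) reduce to (1). Given $\varphi \in \Stx(\fA,\fB)$, \autoref{lem:ApproximateIsomorphism}\ref{item:ApproximateIsomorphismStrictOver2} provides $\rho \in \Apx_2(\fA,\fB)$ with $\rho < \varphi$; since strict refinement forces a uniform additive gap $\varphi \geq \rho + \varepsilon_0$ (everywhere), applying (1) to $\rho$ with $\delta < \varepsilon_0$ gives $g^*f \leq \rho + \delta$ and hence $g^*f < \varphi$ by the criterion, which is (2). For (3) I amalgamate only the first factor of $\rho = \rho_2\rho_1$: NAP applied to $\rho_1 \colon \fA \dashrightarrow \fD$ yields $\fC \in \cK$ with $\fA \subseteq \fC$ (taking the embedding of $\fA$ to be the inclusion $p$) together with $q \colon \fD \to \fC$ such that $q^*p \leq \rho_1 + \delta$, whereupon $f = \rho_2 q^* \colon \fC \dashrightarrow \fB$ is a finite partial isomorphism whose restriction $fp = \rho_2(q^*p)$ to $\fA$ is $\leq \rho + \delta < \varphi$; as $f$ is itself a finite partial isomorphism, hence finitely determined, this upgrades to $f < \varphi$.

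Item (4) is the crux, and the obstacle I expect to dominate the work lies here and in the strictness issues above: NAP is inherently an ``up to $\varepsilon$'' statement and composition infimizes over the (possibly infinite) intermediate space, so the natural estimates are only pointwise and do not localize to a finite set — which is precisely why finite determinacy and the uniform gap from strict refinement are indispensable. The engine is a \emph{collapse} of a length-four composition to $\Apx_2$: for $\sigma_1 = a_2a_1 \in \Apx_2(\fA,\fB)$ and $\sigma_2 = b_2b_1 \in \Apx_2(\fB,\fE)$, the middle block $b_1a_2$ lies in $\Apx_2(\fD_1,\fD_2)$ (its intermediate factor generates a member of $\Age(\fB) \subseteq \cK$), so item (1) supplies $\fC \in \cK$ and embeddings $u, v$ with $v^*u \leq b_1a_2 + \varepsilon$; then $\sigma = (b_2v^*)(ua_1) \in \Apx_2(\fA,\fE)$ satisfies $\sigma \leq \sigma_2\sigma_1 + \varepsilon$, and passing to the approximate isometry $\max(\sigma, \sigma_2\sigma_1) \in \Apx_2^\uparrow$ and letting $\varepsilon \to 0$ shows $\sigma_2\sigma_1 \in \Apx(\fA,\fE)$. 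Since $\Apx(\fA,\fE)$ is closed under coarsening (\autoref{dfn:ApproximateIsometrySpace}), monotonicity upgrades this to $\Apx_2^\uparrow \cdot \Apx_2^\uparrow \subseteq \Apx(\fA,\fE)$; as $\Apx_2^\uparrow$ is dense and the set of pairs whose product lands in the closed, coarsening-closed set $\Apx(\fA,\fE)$ is itself closed by upper semi-continuity of composition (\autoref{lem:ApproximateIsometryTopology}\ref{item:ApproximateIsometryTopologyComposition}), this gives $\Apx(\fA,\fB)\cdot\Apx(\fB,\fE) \subseteq \Apx(\fA,\fE)$. For the strict version I take $\psi_0 < \psi$ and $\chi_0 < \chi$ in $\Apx_2$; their uniform gaps give $\chi\psi \geq \chi_0\psi_0 + \varepsilon$ everywhere, while the collapse produces $\sigma \in \Apx_2(\fA,\fE)$ with $\sigma \leq \chi_0\psi_0 + \varepsilon''$, so for $\varepsilon'' < \varepsilon$ finite determinacy of $\sigma$ gives $\sigma < \chi\psi$, that is $\chi\psi \in \Stx(\fA,\fE)$.
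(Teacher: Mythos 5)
Your proof is correct, and it rests on the same two pillars as the paper's: NAP amalgamation plus the criterion of \autoref{lem:ApproximateIsometryStrictCoarsening}\ref{item:ApproximateIsometryStrictCoarseningCriterion}, with finite determinacy of elements of $\Apx_2$ (which the paper invokes only implicitly, via the choice $X_0 \times Y_0 = \dom f_0 \times \img g_0$) doing the work of upgrading pointwise bounds to strict coarsenings; making that explicit up front is a genuine improvement in readability. The execution differs at three points, mostly in your favour. In item (1) you amalgamate \emph{sequentially}: one NAP application glues $\fA$ to the middle structure, then you transport the second factor across the resulting embedding $q$ (legitimate, by \autoref{lem:ApproximateIsometryPartialIsometry}) and apply NAP once more; the paper instead amalgamates the two halves in parallel and needs a \emph{third} NAP application to join the two amalgams $\fC_1, \fC_2$ over the finite set $\img f_0$, so your route saves one amalgamation and the three-square diagram. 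In item (3) the divergence is larger: the paper routes through item (2) — hence through all of item (1) — and then restricts $g^{-1}$ to a finite subset of $\img g$, whereas you build the finite partial isomorphism $f = \rho_2 q^*$ from a \emph{single} NAP application to the first factor $\rho_1$. This works, but your closing phrase ``finitely determined, hence $f < \varphi$'' elides the one step that must be said: your bound $fp \le \rho + \delta$ lives on $A \times B$, while the criterion has to be fed a bound against the trivial extension $\varphi p^*$ on all of $C \times B$ (and $\dom f \not\subseteq A$). The transfer does hold — since $pp^* \ge \id_C$ one has $f \le (fp)p^* \le \rho p^* + \delta$, while $\varphi p^* \ge \rho p^* + \varepsilon_0$, so $\varphi p^* \ge f + (\varepsilon_0 - \delta)$ everywhere — and only then does finite determinacy of $f$ close the argument; this two-line computation should be included. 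In item (4) your ``collapse'' of the length-four composition is exactly the paper's argument in different notation (the paper's middle block is $f_1 g_0^*$, amalgamated by item (1) into $\fC$), including the observation that the middle block's intermediate structure may be taken finitely generated inside the middle $\cK$-structure. For the passage from $\Apx_2$ to all of $\Apx$, the paper extracts convergent subnets by compactness, while your $\max(\sigma, \sigma_2\sigma_1)$ coarsening trick, followed by density of $\Apx_2^\uparrow$ and the closedness statement \autoref{lem:ApproximateIsometryTopology}\ref{item:ApproximateIsometryTopologyComposition}, reaches the same conclusion without choosing subnets — an equivalent but arguably cleaner packaging, exploiting coarsening-closedness of $\Apx(\fA,\fB)$ exactly where the paper does.
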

\begin{proof}
  For the first item, there exists $\fC_0 \in \cK$ and finite partial isomorphisms $f_0\colon \fA \dashrightarrow \fC_0$, $g_0 \colon \fC_0 \dashrightarrow \fB$ such that $\varphi = g_0 f_0$.
  By NAP there are $\fC_1,\fC_2 \in \cK$ and embeddings as in the diagram below such that $f_2^* f_1 \leq f_0 + \varepsilon$ and $g_2^* g_1 \leq g_0 + \varepsilon$.
  Let $X = \img f_0 \subseteq C_0$, a finite set, and we let $h_0\colon \fC_1 \dashrightarrow \fC_2$ be the finite partial isomorphism sending $f_2 X \mapsto g_1 X$, i.e., $h_0 = (g_1 f_2^*)\rest_{f_2 X \times g_1 X} = g_1 \id_X f_2^*$.
  Applying NAP once more we complete the diagram with $h_2^* h_1 \leq h_0 + \varepsilon$.
  \begin{gather*}
    \begin{xy}
      (0,0)*+{\fA}="A",
      (0,2)*+{\fC_0}="C0",
      (0,4)*+{\fB}="B",
      (2,1)*+{\fC_1}="C1",
      (2,3)*+{\fC_2}="C2",
      (4,2)*+{\fC}="C",
      \ar^{f_0} @{-->} "A";"C0"
      \ar^{g_0} @{-->} "C0";"B"
      \ar_{f_1} "A";"C1"
      \ar^{f_2} "C0";"C1"
      \ar_{g_1} "C0";"C2"
      \ar^{g_2} "B";"C2"
      \ar^{h_0} @{-->} "C1";"C2"
      \ar_{h_1} "C1";"C"
      \ar^{h_2} "C2";"C"
    \end{xy}
  \end{gather*}
  Now,
  \begin{gather*}
    \varphi + 3\varepsilon
    = g_0 \id_X f_0 + 3\varepsilon
    \geq g_2^* g_1 \id_X f_2^* f_1 + \varepsilon
    = g_2^* h_0 f_1 + \varepsilon
    \geq g_2^* h_2^* h_1 f_1.
  \end{gather*}
  Letting $f = h_1 f_1$ and $g = h_2 g_2$ we obtain $\varphi + 4\varepsilon > g^* f$ (by \autoref{lem:ApproximateIsometryStrictCoarsening}\ref{item:ApproximateIsometryStrictCoarseningCriterion}, with $X_0 \times Y_0 = \dom f_0 \times \img g_0$), which is enough.

  For the second item, by \autoref{lem:ApproximateIsomorphism}\autoref{item:ApproximateIsomorphismStrictOver2} there exists $\psi \in \Apx_2^{<\varphi}(\fA,\fB)$ and we may assume that $\psi + \varepsilon < \varphi$ for some $\varepsilon > 0$.
  We then apply the first item.

  For the third item, by definition of $\Apx_2$ there exists a finitely generated $\fB \supseteq \fB_0 \in \cK$ such that $\varphi \in \Apx_2(\fA,\fB_0)$.
  Applying the second item, there exists $\fC \in \cK$ and embeddings $f\colon \fA \rightarrow \fC$ and $g\colon \fB \rightarrow \fC$ such that $g^* f < \varphi$.
  We may assume that $f$ is an inclusion, and by \autoref{lem:ApproximateIsometryStrictCoarsening}\autoref{item:ApproximateIsometryStrictCoarseningCriterion} there exists a finite set $C_0 \subseteq C$ such that $g^{-1}\rest_{C_0} \in \Apx^{<\varphi}(\fC,\fB)$.
  Finally, by definition of the trivial extension, we may assume that $C_0 \subseteq \img g$ so $g^{-1}\rest_{C_0}$ is a finite partial isomorphism.

  For the last item, let $\varphi_i \in \Stx(\fA_i,\fA_{i+1})$ for $i = 0,1$, where $\fA_i$ are $\cK$-structures.
  By definition there exist $\psi_i \in \Apx_2^{<\varphi_i}(\fA_i,\fA_{i+1})$, say $\psi_i = g_i* f_i$ where $f_i\colon \fA_i \dashrightarrow \fB_i$ and $g_i\colon \fA_{i+1} \dashrightarrow \fB_i$ are finite partial isomorphisms, and $\fB_i \in \cK$.
  By the first item (and the fact that only a finitely generated sub-structure of $\fA_1$ is actually involved) there are $\fC \in \cK$ and embeddings $h_i\colon \fB_i \rightarrow \fC$ such that $h_1^* h_0 < f_1 g_0^* + \varepsilon$:
  \begin{gather*}
    \begin{xy}
      (0,0)*+{\fA_0}="A0",
      (0,2)*+{\fA_1}="A1",
      (0,4)*+{\fA_2}="A2",
      (2,1)*+{\fB_0}="B0",
      (2,3)*+{\fB_1}="B1",
      (4,2)*+{\fC}="C",
      \ar_{\psi_0} @{~>} "A0";"A1"
      \ar_{\psi_1} @{~>} "A1";"A2"
      \ar^{f_0} @{-->} "A0";"B0"
      \ar_{g_0} @{-->} "A1";"B0"
      \ar^{f_1} @{-->} "A1";"B1"
      \ar_{g_1} @{-->} "A2";"B1"
      \ar^{h_0} "B0";"C"
      \ar_{h_1} "B1";"C"
    \end{xy}
  \end{gather*}
  It follows that $(h_1 g_1)^* (h_0 f_0) \leq \psi_1 \psi_0 + \varepsilon$, and $\rho = (h_1 g_1)^* (h_0 f_0)$ belongs to $\Apx_2(\fA_0,\fA_2)$.
  Now, since $\varepsilon$ was arbitrary, we could have chosen it so that $\psi_i + \varepsilon < \varphi_i$.
  Again by \autoref{lem:ApproximateIsometryStrictCoarsening}\ref{item:ApproximateIsometryStrictCoarseningCriterion} it follows that $\psi_1 \psi_0 + \varepsilon < \psi_1 \psi_0 + 2\varepsilon \leq \varphi_1 \varphi_0$, so $\rho < \varphi_1 \varphi_0$ and $\varphi_1 \varphi_0 \in \Stx(\fA_0,\fA_1)$.

  When $\varphi_i \in \Apx(\fA_i,\fA_{i+1})$, we have $\varphi_i = \lim_\alpha \varphi_{i,\alpha}$ for nets $\varphi_{i,\alpha} \in \Stx(\fA_i,\fA_{i+1})$, by \autoref{lem:ApproximateIsomorphism}.
  Possibly passing to a sub-net we may assume $\psi = \lim_\alpha \varphi_{1,\alpha} \varphi_{0,\alpha}$ exists (by compactness).
  Then $\psi \in \Apx(\fA_0,\fA_2)$ and $\psi \leq \varphi_1 \varphi_0$ (by \autoref{lem:ApproximateIsometryTopology}\ref{item:ApproximateIsometryTopologyComposition}), which is enough.
\end{proof}

\begin{conv}
  We equip products of metric spaces with the supremum distance, so for two $n$-tuples $\bar a$ and $\bar b$ we have $d(\bar a,\bar b) = \max_i d(a_i,b_i)$.
\end{conv}

\begin{dfn}
  \label{dfn:Kn}
  Let $\cK$ be a class of finitely generated $\cL$-structures.
  For $n \geq 0$, we let $\cK_n$ denote the class of all pairs $(\bar a,\fA)$, where $\fA \in \cK$ and $\bar a \in A^n$ generates $\fA$.
  By an abuse of notation, we shall refer to $(\bar a,\fA) \in \cK_n$ by $\bar a$ alone, and denote the generated structure $\fA$ by $\langle \bar a \rangle$.

  By $\Apx(\bar a,\fB)$ we shall mean those members $\Apx(\langle \bar a \rangle, \fB)$ which extend trivially from an approximate isometry $\bar a \rightsquigarrow B$, and similarly for $\Stx(\bar a,\fB)$, $\Apx(\bar a,\bar b)$, and so on.
  Under HP and NAP, these still compose correctly as per \autoref{lem:NAP}.
\end{dfn}

\begin{dfn}
  \label{dfn:IntrinsicDistance}
  Let $\cK$ be a class of finitely generated structures with NAP.
  We equip $\cK_n$ with a pseudo-distance $d^\cK$ defined by
  \begin{gather*}
    d^\cK(\bar a,\bar b) = \inf_{\psi \in \Stx(\bar a,\bar b)} d(\psi) = \inf_{\psi \in \Apx(\bar a,\bar b)} d(\psi), \qquad \text{where } d(\psi) = \max_i \psi(a_i,b_i).
  \end{gather*}
\end{dfn}

Equivalently, $d(\bar a,\bar b)$ is the infimum of all possible $d(\bar a,\bar b)$ under embeddings of $\langle \bar a \rangle$ and $\langle \bar b \rangle$ into some $\fC \in \cK$.
The triangle inequality is a consequence of \autoref{lem:NAP}.

\begin{dfn}
  \label{dfn:FraisseClass}
  A \emph{Fraïssé class} (of $\cL$-structures) is a class $\cK$ of finitely generated $\cL$-structures having the following properties:
  \begin{itemize}
  \item \emph{HP}.
  \item \emph{JEP (Joint Embedding Property)}: Every two members of $\cK$ embed in a third one.
  \item \emph{NAP}.
  \item \emph{PP (Polish Property)}: The pseudo-metric $d^\cK$ is separable and complete on $\cK_n$ for each $n$.
  \item \emph{CP (Continuity Property)}: Every symbol is continuous on $\cK$.
    For an $n$-ary predicate symbol $P$, this means that the map $K_n \rightarrow \bR$, $\bar a \mapsto P^{\langle \bar a\rangle}(\bar a)$, is continuous.
    For an $n$-ary function symbol $P$, this means that for each $m$, the map $K_{n+m} \rightarrow \cK_{n+m+1}$, $(\bar a, \bar b) \mapsto \bigl( \bar a, \bar b, f^{\langle \bar a, \bar b \rangle}(\bar a) \bigr)$, is continuous.
  \end{itemize}
  We say that $\cK$ is an \emph{incomplete Fraïssé class} if instead of PP \& CP we have:
  \begin{itemize}
  \item \emph{WPP (Weak Polish Property)}: The pseudo-metric $d^\cK$ is separable on $\cK_n$ for each $n$.
  \item \emph{CCP (Cauchy Continuity Property)}: Every symbol is Cauchy continuous on $\cK$ (as per \autoref{rmk:Continuity}).
  \end{itemize}
\end{dfn}

\begin{rmk}
  \label{rmk:FraisseClass}
  We observe that:
  \begin{enumerate}
  \item CP implies that the kernel of $d^\cK$ on $\cK_n$ is exactly the isomorphism relation: $d^\cK(\bar a,\bar b) = 0$ if and only if exists a (necessarily unique) isomorphism $\varphi\colon \langle \bar a \rangle \rightarrow \langle \bar b \rangle$ sending $\bar a \mapsto \bar b$.
    It follows that a partial isometry between $\cK$-structures is an approximate isomorphism if and if it is a partial isomorphism.
  \item Together with PP this implies that a $\cK$-structure generated by a set of cardinal $\kappa$ has density character at most $\kappa + \aleph_0$ (even if the language contains more than $\kappa$ symbols).
    In particular, every member of $\cK$ is separable.
  \item Every Fraïssé class is in particular an incomplete Fraïssé class, and conversely, every incomplete Fraïssé class $\cK$ admits a unique \emph{completion} $\widehat \cK$, consisting of all limits of Cauchy sequences in $\cK$ (that is, in $\cK_n$, as $n$ varies), which is a Fraïssé class.
  \item JEP is equivalent to saying that the empty approximate isometry is always a (strictly) approximate isomorphism.
    Modulo NAP, JEP is further equivalent to there being a unique $\emptyset$-generated (empty, if there are no constant symbols) structure in $\cK$.
  \end{enumerate}
\end{rmk}

\begin{dfn}
  \label{dfn:ApproximateIsometryTotal}
  We say that an approximate isometry $\psi\colon X \rightsquigarrow Y$ is \emph{$r$-total} for some $r > 0$ if $\psi^*\psi \leq \id_X + 2r$, or equivalently, if for all $x \in X$ and $s > r$ there is $y \in Y$ such that $\psi(x,y) < s$.
  If $\psi\psi^* \leq \id_Y + 2r$ then we say that $\psi$ is \emph{$r$-surjective} and if it is both then it is \emph{$r$-bijective}.
\end{dfn}

\begin{dfn}
  \label{dfn:FraisseLimit}
  Let $\cK$ be a Fraïssé class.
  By a \emph{limit} of $\cK$ we mean a separable $\cK$-structure $\fM$, satisfying that for every $\cK$-structure $\fA$, finite $A_0 \subseteq A$, $\psi \in \Stx(\fA,\fM)$ and $\varepsilon > 0$ there exists $\varphi \in \Stx^{<\psi}(\fA,\fM)$ which is $\varepsilon$-total on $A_0$.
\end{dfn}

\begin{lem}
  \label{lem:FraisseLimitCriterion}
  Let $\cK$ be a Fraïssé class, $\fM$ a separable $\cK$-structure.
  For each $n$ let $\cK_{n,0} \subseteq \cK_n$ be $d^\cK$-dense, and let $M_0 = \{a_i\}_{i \in \bN} \subseteq M$ be dense.
  We shall use the notation $a_{<m}$ for the tuple $(a_i)_{i<m}$.

  Then in order for $\fM$ to be a limit of $\cK$, is enough that for every $n,m \in \bN$, $\varepsilon > 0$, $\bar b \in \cK_{n,0}$ and $\psi\colon \bar b \times a_{<m} \rightarrow \bQ$, if $\psi \in \Stx(\bar b,\fM)$ (so in particular, $\psi\colon \bar b \rightsquigarrow a_{<m}$ is an approximate isometry) then there exist $\varphi \in \Apx^{\leq \psi}(\bar b,\fM)$ which is $\varepsilon$-total on $\bar b$.
\end{lem}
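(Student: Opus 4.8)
The plan is to obtain the full strength of \autoref{dfn:FraisseLimit} from the stated criterion by reducing an arbitrary instance to the special one, the whole point being that all the strictness we must produce can be localised on a single finite set. So fix a $\cK$-structure $\fA$, a finite $A_0\subseteq A$, some $\psi\in\Stx(\fA,\fM)$ and $\varepsilon>0$, and fix a witness $\rho\in\Apx^{<\psi}(\fA,\fM)$. By \autoref{lem:ApproximateIsometryStrictCoarsening}\ref{item:ApproximateIsometryStrictCoarseningCriterion} there are finite $X_0\subseteq A$, $Y_0\subseteq M$ and $\varepsilon_0>0$ with $\psi\geq\rho\rest_{X_0\times Y_0}+\varepsilon_0$, and using density of $M_0$ I may take $Y_0\subseteq M_0$. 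Since $\fA$ is a $\cK$-structure, the finite tuple $\bar c$ enumerating $A_0\cup X_0$ generates a structure $\langle\bar c\rangle\in\Age(\fA)\subseteq\cK$ (HP). The key observation is that, by the same criterion together with monotonicity of the trivial extension, it now suffices to produce $\varphi\in\Stx(\fA,\fM)$ which is $\varepsilon$-total on $A_0$ and satisfies $\varphi\leq\rho+c$ on $X_0\times Y_0$ for some $c<\varepsilon_0$: indeed then $\psi\geq\rho\rest_{X_0\times Y_0}+\varepsilon_0\geq\varphi\rest_{X_0\times Y_0}+(\varepsilon_0-c)$, whence $\varphi<\psi$ and $\varphi\in\Stx^{<\psi}(\fA,\fM)$.

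Next I would replace the generating tuple $\bar c$ by a nearby one in the dense set. Using $d^\cK$-density of $\cK_{n,0}$ (\autoref{dfn:IntrinsicDistance}), choose $\bar b\in\cK_{n,0}$ with $d^\cK(\bar c,\bar b)$ small, that is a strictly approximate isomorphism $\theta\in\Stx(\bar c,\bar b)$ with $\theta(c_i,b_i)<\delta_0$ for each $i$. Writing $\rho_0$ for the restriction of $\rho$ to $\langle\bar c\rangle$, the composite $\rho_0\theta^*\in\Apx(\bar b,\fM)$ by \autoref{lem:NAP} (composing correctly as per \autoref{dfn:Kn}). A restrict-and-bump of $\rho_0\theta^*$ (as in the proof of \autoref{lem:ApproximateIsomorphism}), followed by rounding to rational values and nudging the finitely many surviving $M$-coordinates into $M_0=\{a_i\}$, produces $\sigma\in\Stx(\bar b,\fM)$ taking rational values on $\bar b\times a_{<m}$ for a suitable $m$, with $\sigma\leq\rho_0\theta^*+c'$ on the retained coordinates. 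This $\sigma$ is a legitimate input to the criterion.

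Applying the criterion to $\sigma$ with a totality parameter $\varepsilon'$ yields $\varphi_b\in\Apx^{\leq\sigma}(\bar b,\fM)$ that is $\varepsilon'$-total on $\bar b$; I then transport back by setting $\varphi_c=\varphi_b\theta\in\Apx(\bar c,\fM)$, again by \autoref{lem:NAP}. Totality passes through, since $\inf_y\varphi_c(c_i,y)\leq\theta(c_i,b_i)+\inf_y\varphi_b(b_i,y)<\delta_0+\varepsilon'$, so $\varphi_c$ is $(\delta_0+\varepsilon')$-total on $\bar c\supseteq A_0$. For the control bound, $\varphi_c\leq\sigma\theta\leq\rho_0\theta^*\theta+c'$, and $\theta^*\theta(c_i,c_i)<2\delta_0$ gives $\varphi_c\leq\rho+(2\delta_0+c')$ on $X_0\times Y_0$; choosing $\delta_0,c',\varepsilon'$ small enough that $2\delta_0+c'<\varepsilon_0$ and $\delta_0+\varepsilon'\leq\varepsilon$ meets both requirements. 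Finally a restrict-and-bump of $\varphi_c$ on $\bar c$—retaining, for each generator, one $M$-coordinate nearly achieving the totality infimum—places it in $\Stx$ without spoiling totality, and its trivial extension to $\fA$ is the desired $\varphi$.

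The main obstacle is the coherent $\varepsilon$-bookkeeping needed to push the strict inequality and the totality simultaneously through the two compositions with $\theta$ and $\theta^*$ and through the final trivial extension, keeping every estimate anchored on the fixed finite set $X_0\times Y_0$; for a single composition this is precisely the computation at the end of the proof of \autoref{lem:NAP}, and here it must be run in parallel with the totality requirement. The one point with no classical analogue is that totality is an infimum over all of $M$, yet the criterion forces us through finitely many $M$-coordinates; this is resolved by carrying, for each generator, a single near-optimal witness, so that a finite support suffices.
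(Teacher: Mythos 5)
Your proposal is correct and takes essentially the same approach as the paper: the paper's proof likewise localizes $\psi$ to a strictly finer, finitely supported $\psi_0 \in \Stx(\bar b,\fM)$ with room to spare ($\psi_0 + 3\delta < \psi$), passes to a nearby tuple $\bar c \in \cK_{n,0}$ via a witness $\rho \in \Stx(\bar c,\bar b)$ with $d(\rho) < \delta$, extracts a rational-valued element of $\Stx(\bar c,\fM)$ supported on $\bar c \times a_{<m}$ to feed into the hypothesis, and transports the resulting $\delta$-total refinement back as $\varphi'\rho^* + \delta < \psi$ using $\rho\rho^* \leq \id + 2\delta$ --- exactly your $\theta$/$\theta^*$ bookkeeping with the coarsening/refining roles transposed. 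The only blemish is that your displayed chain $\sigma\theta \leq \rho_0\theta^*\theta + c'$ fails as a global pointwise inequality (restriction coarsens, so in fact $\sigma \geq \rho_0\theta^* + c'$ everywhere); but the bound you need on $X_0 \times Y_0$ does hold by evaluating the infimum defining $\varphi_b\theta$ at the matched point $b_i$ --- the ``anchoring'' you yourself describe --- so this is a notational slip rather than a gap.
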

\begin{proof}
  Let $\fB$ be a $\cK$-structure, $B_0 \subseteq B$ finite, $\psi \in \Stx(\fB,\fM)$ and $\varepsilon > 0$.
  There exist a finite tuple $\bar b \in B^n$ and $\psi_0 \in \Stx(\bar b,\fM)$ such that $\psi_0 < \psi$, and we may assume that $\bar b$ contains $B_0$.
  Let $0 < \delta \leq \varepsilon/3$ be small enough that $\psi_0 + 3 \delta < \psi$.
  Let $\bar c \in \cK_{n,0}$ with $d^\cK(\bar c,\bar b) < \delta$, and let $\rho \in \Stx(\bar c,\bar b)$ witness this, namely satisfy $d(\rho) < \delta$ as per \autoref{dfn:IntrinsicDistance}.
  Then $\psi_0 \rho \in \Stx(\bar c,\fM)$, so there exists $\psi_1 \in \Apx^{<\psi_0 \rho}(\langle \bar c \rangle,\fM)$.
  Replacing $\psi_1$ with its restriction to $\bar c \times M_0$ we still have $\psi_1 < \psi_0 \rho$.
  By \autoref{lem:ApproximateIsometryStrictCoarsening}\ref{item:ApproximateIsometryStrictCoarseningCriterion} there exist some $m$ and $\psi'\colon \bar c \times a_{<m} \rightarrow \bQ$ such that $\psi_1 < \psi' < \psi_0 \rho$, i.e., $\psi' \in \Stx^{< \psi_0 \rho}(\bar c,\fM)$.
  By assumption there exists $\varphi' \in \Apx^{\leq \psi'}(\bar c,\fM)$ which is $\delta$-total on $\bar c$, and we may further assume that $\varphi' \in \Apx^{\leq \psi_0 \rho}(\bar c,a_{<k})$ for some $k$.
  Thus $\varphi'\rho^* < \psi'\rho^* + \delta \leq \psi_0\rho\rho^* + \delta \leq \psi_0 + 3\delta < \psi$, so $\psi' \rho^* + \delta \in \Stx^{<\psi}(\fB,\fM)$ and $\psi' \rho^* + \delta$ is moreover $\varepsilon$-total on $\bar b$, as desired.
\end{proof}

\begin{lem}
  \label{lem:FraisseLimitExists}
  Every Fraïssé class $\cK$ admits a limit.
\end{lem}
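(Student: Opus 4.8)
The plan is to verify the hypotheses of the criterion in \autoref{lem:FraisseLimitCriterion}, which reduces the task to meeting a \emph{countable} list of requirements. Concretely, I would build an increasing chain $\fM_0 \subseteq \fM_1 \subseteq \cdots$ of members of $\cK$ (starting from some $\fM_0 \in \cK$, and using JEP to keep everything in a single chain), set $\fM = \widehat{\bigcup_k \fM_k}$, take $M_0 = \bigcup_k \fM_k$ enumerated as $\{a_i\}_{i \in \bN}$, and fix countable $d^\cK$-dense sets $\cK_{n,0}$ (available by WPP). A \emph{requirement} is a tuple $(n,m,\varepsilon,\bar b,\psi)$ with $\bar b \in \cK_{n,0}$, $\varepsilon \in \bQ_{>0}$ and $\psi\colon \bar b \times a_{<m} \to \bQ$ lying in $\Stx(\bar b,\fM)$; since $a_{<m}$ is finite and the values are rational, there are only countably many of these. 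The construction handles them one at a time by a routine dovetailing, each requirement being treated at some stage after its parameters $a_{<m}$ have entered the chain, so that the relevant $a_{<m}$ sits inside the current $\fM_k$.

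The single-step move is where NAP does the work. Given a requirement $(n,m,\varepsilon,\bar b,\psi)$ with $a_{<m} \subseteq \fM_k$, first note that $\langle a_{<m}\rangle \in \cK$ by HP, and that $\psi \in \Stx(\bar b,\langle a_{<m}\rangle)$: indeed, composing an $\Apx_2^{<\psi}(\bar b,\fM)$-witness (which exists by \autoref{lem:ApproximateIsomorphism}\ref{item:ApproximateIsomorphismStrictOver2}) with the pseudo-inverse of the inclusion $\langle a_{<m}\rangle \hookrightarrow \fM$ leaves its values on $\bar b \times a_{<m}$ unchanged, so strictness persists against $\psi$ (which is $\infty$ off $\bar b \times a_{<m}$) by \autoref{lem:ApproximateIsometryStrictCoarsening}\ref{item:ApproximateIsometryStrictCoarseningCriterion}. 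Now \autoref{lem:NAP}\ref{item:NAPStxAmalgamation} supplies $\fC \in \cK$ and embeddings $f\colon \langle\bar b\rangle \to \fC$, $g\colon \langle a_{<m}\rangle \to \fC$ with $g^*f < \psi$; shrinking, I extract from \autoref{lem:ApproximateIsometryStrictCoarsening}\ref{item:ApproximateIsometryStrictCoarseningCriterion} a uniform margin $\delta \le \varepsilon$ with $g^*f(b_j,a_i) + \delta \le \psi(b_j,a_i)$ on the finite set $\bar b \times a_{<m}$. Amalgamating $\fM_k$ with $\fC$ over $\langle a_{<m}\rangle$ via NAP produces $\fM_{k+1} \in \cK$, taken to contain $\fM_k$ as a literal substructure, together with an embedding $\iota\colon \fC \to \fM_{k+1}$ whose restriction to $a_{<m}$ agrees with the inclusion up to a prescribed error $\varepsilon' \le \delta$.

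It remains to read off the required approximate isometry in the limit. The genuine map $b_j \mapsto \iota f(b_j) \in \fM_{k+1} \subseteq \fM$ is a partial isometry, hence a fully total (in particular $\varepsilon$-total) approximate isometry $\varphi$ on $\bar b$. Its value satisfies $\varphi(b_j,a_i) = d(\iota f b_j, a_i) \le d(f b_j, g a_i) + \varepsilon' = g^*f(b_j,a_i) + \varepsilon' \le \psi(b_j,a_i)$ on $\bar b \times a_{<m}$, because $\varepsilon' \le \delta$; and since $\psi = \infty$ elsewhere, the trivial extension gives $\varphi \le \psi$ throughout. Thus $\varphi \in \Apx^{\le\psi}(\bar b,\fM)$ is $\varepsilon$-total on $\bar b$, which is exactly what the criterion demands. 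Finally, $\bigcup_k \fM_k$ is a separable $\cK$-structure (each finitely generated substructure lies in some $\fM_k \in \cK$, so HP applies), and its completion $\fM$ remains a $\cK$-structure and stays separable: approximating a finite tuple $\bar c \in M^n$ by tuples in $\bigcup_k\fM_k$ gives a $d^\cK$-Cauchy sequence of members of $\cK_n$, whose limit exists by PP and is identified with $\langle\bar c\rangle$ by CP.

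The main obstacle is the coordination of errors together with the bookkeeping, rather than any single hard estimate: I must enumerate the countably many requirements so each is met after its parameters appear, and at each step choose the amalgamation error $\varepsilon'$ strictly below the margin $\delta$ extracted from the \emph{strict} inequality $g^*f < \psi$, so that the realization built at a finite stage yields $\varphi \le \psi$ \emph{exactly} in the completion, not merely $\varphi \le \psi + \varepsilon'$. Everything else — that the chain never leaves $\cK$, that the completion is again a $\cK$-structure, and that the parameters of each requirement are eventually present — reduces to NAP, HP, and the completeness/continuity axioms PP and CP.
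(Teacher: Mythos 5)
Your proposal is correct and takes essentially the same route as the paper's proof: build a countable increasing chain in $\cK$ by dovetailing through the countably many rational-valued requirements, realize each one by an NAP amalgamation (\autoref{lem:NAP}\ref{item:NAPStxAmalgamation}), pass to the completion of the union using PP and CP, and conclude via \autoref{lem:FraisseLimitCriterion}. One organizational remark: since $\fM$ does not yet exist during the construction, the dovetailing should act on every tuple satisfying the stage-wise condition $\psi \in \Stx(\bar b, \langle a_{<m} \rangle)$ (the paper phrases its bookkeeping with $\psi \in \Stx(\bar b, \fA_m)$ for the same reason), and your reduction from $\psi \in \Stx(\bar b, \fM)$ to $\psi \in \Stx(\bar b, \langle a_{<m} \rangle)$ then belongs to the final verification that the criterion's hypotheses hold — which is precisely the implication the paper leaves implicit.
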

\begin{proof}
  We construct an increasing chain of $\fA_n \in \cK$, starting with $\fA_0$ being the unique $\emptyset$-generated structure in $\cK$, letting $i_{n,m}\colon \fA_n \rightarrow \fA_m$ denote the inclusion maps.
  For each $n$ we fix a countable $d^\cK$-dense subset of $\cK_n$, call it $\cK_{n,0}$, and a countable dense subset $A_{n,0} \subseteq A_n$, such that $A_{n,0} \subseteq A_{n+1,0}$.

  By \autoref{lem:NAP}\autoref{item:NAPStxAmalgamation} we can construct the chain $\fA_n$ so that for each $\bar b \in \cK_{n,0}$, finite subset $B \subseteq A_{m,0}$ and $\psi\colon \bar b \times B \rightarrow \bQ$, if $\psi \in \Stx(\bar b,\fA_m)$ then there exists $k > m$ and an embedding $h\colon \langle \bar b \rangle \rightarrow \fA_{k+1}$ such that $i_{k,k+1}^* h < \psi$, and in particular $h < \psi$.
  By PP and CP, the chain $\fA_0 \subseteq \fA_1 \subseteq \ldots$ admits a unique limit in the category of $\cK$-structures, which we denote by $\fM = \bigcup \fA_n$, in which $M_0 = \bigcup A_{n,0} \subseteq M$ is dense.
  By \autoref{lem:FraisseLimitCriterion}, $\fM$ is a limit.
\end{proof}

In fact, we can do better.
For $\bar a \in \cK_n$ let $[\bar a]$ denote the equivalence class $\bar a/\ker d^\cK$, and let $\overline \cK_n = \cK_n/\ker d^\cK$ denote the quotient space, equipped with the quotient metric (which is separable and complete, by PP).
For each $n$ we have a natural map $\overline \cK_{n+1} \rightarrow \overline \cK_n$, sending $[a_0,\ldots,a_n] \mapsto [a_0,\ldots,a_{n-1}]$, giving rise to an inverse system with a limit $\overline \cK_\omega = \varprojlim \overline \cK_n$, equipped with the topology induced from $\prod_n \overline \cK_n$.
A member of $\overline \cK_\omega$ will be denoted by $\xi$, represented by a compatible sequence $(\xi_n)_{n \in \bN}$.
Considering limits of increasing chains as in the proof of \autoref{lem:FraisseLimitExists}, we see that for every $\xi \in \overline \cK_\omega$ there exists a $\cK$-structure $\fM^\xi$ along with a generating sequence $\bar a^\xi = (a_i^\xi)_{i \in \bN} \subseteq M^\xi$, such that $\xi_n = [a^\xi_{<n}]$ for all $n$, and this pair $(\fM^\xi,\bar a^\xi)$ is determined by $\xi$ up to a unique isomorphism.
Conversely, any pair of a separable $\cK$-structure $\fM$ and a generating $\bN$-sequence is of this form.

\begin{thm}
  \label{thm:FraisseLimitDenseGd}
  Let $\cK$ be a Fraïssé class, and let $\overline \cK_\omega$ be as above.
  Let $\Xi$ be the set of $\xi \in \overline \cK_\omega$ for which $\fM^\xi$ is a limit of $\cK$ and every tail of the sequence $(a^\xi_i)$ is dense in $\fM^\xi$.
  Then $\overline \cK_\omega$ is a Polish space and $\Xi \subseteq \overline \cK_\omega$ is a dense $G_\delta$.
\end{thm}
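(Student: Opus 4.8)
The plan is to prove Polishness first and then exhibit $\Xi$ as a countable intersection of dense $G_\delta$ sets, so that both the density and the $G_\delta$ claim follow from the Baire category theorem.

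\emph{Polishness.} By PP each $\overline\cK_n$ is separable and complete, hence Polish, and by CP the kernel of $d^\cK$ is exactly isomorphism, so the quotient metric on $\overline\cK_n$ is genuine. The bonding maps $\overline\cK_{n+1}\to\overline\cK_n$, $[a_{<n+1}]\mapsto[a_{<n}]$, are $1$-Lipschitz for $d^\cK$ (restricting a witnessing $\psi\in\Stx$ to the first $n$ coordinates can only decrease $\max_i\psi(a_i,b_i)$ in \autoref{dfn:IntrinsicDistance}), hence continuous; therefore $\overline\cK_\omega=\varprojlim\overline\cK_n$ is a closed subset of the Polish product $\prod_n\overline\cK_n$, and so is itself Polish. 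I record that a basic neighbourhood of $\xi$ is determined by prescribing $\xi_0,\dots,\xi_{M-1}$ up to some radius, and that each $\xi$ corresponds, as explained before the statement, to a pair $(\fM^\xi,\bar a^\xi)$ unique up to isomorphism.

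\emph{The limit conditions.} By \autoref{lem:FraisseLimitCriterion}, applied with a fixed countable $d^\cK$-dense $\cK_{n,0}\subseteq\cK_n$ and the sequence $(a_i^\xi)$ as $M_0$, it suffices to control, for each quadruple $(n,m,\bar b,\psi)$ with $\bar b\in\cK_{n,0}$ and $\psi\colon\bar b\times\{0,\dots,m-1\}\to\bQ$, and each $\varepsilon\in\bQ_{>0}$, the set
\[
G_{n,m,\bar b,\psi,\varepsilon}=\{\xi:\psi\notin\Stx(\bar b,\fM^\xi)\}\cup\{\xi:\exists\,\varphi\in\Apx^{\le\psi}(\bar b,\fM^\xi)\text{ that is }\varepsilon\text{-total on }\bar b\},
\]
where $\psi$ is read off the first $m$ coordinates $a_{<m}^\xi$ of $\xi$. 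Because $\psi$ is supported on $\bar b\times a_{<m}^\xi$, both defining conditions are \emph{local}, i.e.\ factor through finitely many coordinates of $\xi$: via \autoref{lem:NAP}\ref{item:NAPStxToEmbedding} a witness of $\psi\in\Stx$ is coded by a finite partial isomorphism into a finite subtuple of $(a_i^\xi)$, so membership depends on finitely many $\xi_k$ only. Moreover, since strict refinements and $\varepsilon$-total approximate isomorphisms are stable under small perturbation (\autoref{lem:ApproximateIsometryStrictCoarsening}, together with the upper semicontinuity of composition in \autoref{lem:ApproximateIsometryTopology}\ref{item:ApproximateIsometryTopologyComposition} and CP), both $\{\psi\in\Stx(\bar b,\fM^\xi)\}$ and the existence set are open in $\xi$ (allowing a harmless slack in $\varepsilon$, absorbed by quantifying over all rational $\varepsilon$). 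Hence $G_{n,m,\bar b,\psi,\varepsilon}$ is a union of a closed set and an open set, and a finite union of $G_\delta$ sets is $G_\delta$, so it is $G_\delta$. For density, fix a basic neighbourhood prescribing $\xi_0,\dots,\xi_{M-1}$ with $m\le M$; continuing the chain from $\fA=\langle a_{<M}^\xi\rangle$ exactly as in the proof of \autoref{lem:FraisseLimitExists}, if $\psi\notin\Stx(\bar b,\fA)$ then any continuation lands in the first set, while if $\psi\in\Stx(\bar b,\fA)$ then \autoref{lem:NAP}\ref{item:NAPStxAmalgamation} provides an amalgam in which a copy of $\bar b$ is adjoined as new generators, yielding an exact (hence $\varepsilon$-total) $\varphi\le\psi$; either way we reach a point of $G_{n,m,\bar b,\psi,\varepsilon}$ inside the neighbourhood.

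\emph{Density of tails and assembly.} Since $\{t^{\fM^\xi}(a_{<n}^\xi):t\text{ a term},\,n\in\bN\}$ is dense in $M^\xi$, the requirement that every tail of $(a_i^\xi)$ be dense in $\fM^\xi$ is equivalent to the countable family of conditions
\[
T_{t,n,N,\varepsilon}=\{\xi:\exists\,i\ge N,\ d\bigl(a_i^\xi,t^{\fM^\xi}(a_{<n}^\xi)\bigr)<\varepsilon\},
\]
over terms $t$, indices $n,N\in\bN$ and $\varepsilon\in\bQ_{>0}$. Each $T_{t,n,N,\varepsilon}$ is open, because $t^{\fM^\xi}(a_{<n}^\xi)$ depends continuously on $\xi$ by CP; and it is dense, since given a neighbourhood fixing finitely many coordinates one may place the value $t^{\fM^\xi}(a_{<n}^\xi)$ at some index $i\ge N$ beyond them. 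Using \autoref{lem:FraisseLimitCriterion} for sufficiency and \autoref{dfn:FraisseLimit} for necessity, one obtains
\[
\Xi=\bigcap_{n,m,\bar b,\psi,\varepsilon}G_{n,m,\bar b,\psi,\varepsilon}\ \cap\ \bigcap_{t,n,N,\varepsilon}T_{t,n,N,\varepsilon},
\]
a countable intersection of $G_\delta$ sets, hence $G_\delta$; as each factor is a dense $G_\delta$ (hence comeager) in the Polish space $\overline\cK_\omega$, the Baire category theorem gives that $\Xi$ is comeager, in particular dense. \textbf{The main obstacle} is the openness/locality analysis of the limit conditions $G$: one must show that ``$\psi\in\Stx(\bar b,\fM^\xi)$'' and ``an $\varepsilon$-total refinement of $\psi$ exists'' vary in an open manner as the \emph{target structure} $\fM^\xi$ itself moves with $\xi$, which is exactly where the finite coding of witnesses and the semicontinuity of composition (\autoref{lem:ApproximateIsometryTopology}) are needed. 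The density steps, by contrast, are direct reruns of the amalgamation construction of \autoref{lem:FraisseLimitExists}.
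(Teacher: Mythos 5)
Your overall architecture is the paper's: reduce via \autoref{lem:FraisseLimitCriterion} to countably many conditions indexed by rational data, show each is a dense $G_\delta$, and apply the Baire category theorem (the Polishness part and the tail-density sets $T$ are fine; the paper folds tail density into the same family by demanding totality witnesses with index $k \geq m$, while you keep it separate, which is harmless). The genuine gap is precisely at what you call the main obstacle: the claimed closed-union-open decomposition of $G_{n,m,\bar b,\psi,\varepsilon}$ is false for the sets as you defined them, because you kept the \emph{non-strict} conditions of \autoref{lem:FraisseLimitCriterion}. The clause ``there exists $\varphi \in \Apx^{\leq\psi}(\bar b,\fM^\xi)$ which is $\varepsilon$-total'' is not open in $\xi$: transporting a witness $\varphi$ to a nearby $\zeta$ by composing with a small $\rho\in\Stx(a^\xi_{<L},a^\zeta_{<L})$ only yields $\rho\varphi \leq \psi + d(\rho)$ and $(\varepsilon+d(\rho))$-totality, and this loss is real. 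Concretely, in the class of finite metric spaces take $\bar b=(b)$ a single point, $m=2$, $\psi(b,0)=\psi(b,1)=\delta_0$, and $\delta_0+\varepsilon < d(a^\xi_0,a^\xi_1) < 2\delta_0$; then the clause amounts to the existence of $y \in M^\xi$ with $d(a^\xi_j,y)\leq\delta_0+\varepsilon$ for $j=0,1$, a non-strict condition of the form $\inf_i x_i \leq c$ in suitable coordinates, which is not open. Likewise ``$\psi\in\Stx(\bar b,\fM^\xi)$'' is not an open condition on $\xi$, since by \autoref{lem:FraisseLimitCriterion} it presupposes that the fixed rational matrix $\psi$ is itself an approximate isometry $\bar b \rightsquigarrow a^\xi_{<m}$, a non-strict bi-Katětov requirement that can be destroyed by perturbing $\xi$. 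The parenthetical ``slack absorbed by quantifying over rational $\varepsilon$'' cannot repair this, because slack is needed in the comparison with $\psi$ as much as in $\varepsilon$; carrying out the repair honestly leads exactly to the paper's formulation, which quantifies over ``good'' $\varphi\in\Stx(\bar b,\fM^\xi)$ subject to \emph{strict} inequalities $\varphi(b_i,a^\xi_j)<\psi(b_i,j)$ and strict totality witnesses $\varphi(b_i,a^\xi_k)<\varepsilon$ with $k\geq m$. With strict inequalities the nonexistence of a good $\varphi$ is closed, existence-with-witnesses is open, and the transfer arguments by composition with $\rho$ genuinely work; with your non-strict versions, no argument that the sets are even $G_\delta$ is given.

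Two smaller problems are of the same nature. Your locality claim, that membership in the two clauses ``depends on finitely many $\xi_k$ only'' via \autoref{lem:NAP}\ref{item:NAPStxToEmbedding}, is not what that lemma provides: the finite partial isomorphism it produces has image an arbitrary finite subset of $M^\xi$, not a subtuple of the generating sequence. What is true, and requires an argument you do not give, is that since $\psi$ is supported on $\bar b\times a_{<m}$, any witness $\chi<\psi$ restricts, via \autoref{lem:ApproximateIsometryStrictCoarsening}\ref{item:ApproximateIsometryStrictCoarseningCriterion} and composition with the pseudo-inverse of the inclusion $\langle a^\xi_{<m}\rangle \rightarrow \fM^\xi$, to a witness over $\langle a^\xi_{<m}\rangle$. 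Relatedly, the first horn of your density argument, ``if $\psi\notin\Stx(\bar b,\fA)$ then any continuation lands in the first set,'' is unjustified as stated, since enlarging a structure enlarges $\Stx$; it is saved only by that same restriction argument, whereas the paper sidesteps the issue by performing the dichotomy at $\xi$ itself (in $\fM^\xi$), and only then pulling the good $\varphi$ back to a finite stage $\bar b\times a^\xi_{<\ell}$ before amalgamating a copy of $\bar b$ as new generators.
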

\begin{proof}
  That $\overline \cK_\omega$ is a Polish space is clear.

  Let $\cK_{n,0} \subseteq \cK_n$ be countable dense as earlier, and let $\bar b \in \cK_{n,0}$, $\varepsilon > 0$ (say rational) and $\psi\colon \bar b \times m \rightarrow \bQ^{>0}$.
  Define $X_{\bar b,\varepsilon,\psi} \subseteq \overline \cK_\omega$ to consist of all $\xi$ such that one of the following holds:
  \begin{itemize}
  \item either there is no $\varphi \in \Stx(\bar b,\fM^\xi)$ such that $\varphi(b_i,a_j^\xi) < \psi(b_i,j)$ for all $i < n$, $j < m$ (let us call such a $\varphi$ \emph{good}),
  \item or there exists a good $\varphi$ such that, moreover, for each $i < n$ there is $k \geq m$ with $\varphi(b_i,a_k^\xi) < \varepsilon$.
  \end{itemize}
  It is easy to check using \autoref{lem:FraisseLimitCriterion} that $\Xi$ is the intersection of all such $X_{\bar b,\varepsilon,\psi}$, of which there are countably many, so all we need to show is that each $X_{\bar b,\varepsilon,\psi}$ is a dense $G_\delta$ set.

  The first possibility defines a closed set and the second an open one, so $X_{\bar b,\varepsilon,\psi}$ is indeed a $G_\delta$ set.
  For density, let $U \subseteq \overline \cK_\omega$ be open and $\xi \in U$.
  If there is no good $\varphi \in \Stx(\bar b,\fM^\xi)$ then $\xi \in X_{\bar b,\varepsilon,\psi} \cap U$ and we are done.
  Otherwise, let us fix a good $\varphi$, and let $\varphi_0 \in \Stx(\bar b,a_{<m}^\xi)$ be the restriction of $\varphi$ to $\bar b \times a_{<m}^\xi$.
  We may assume that $U$ is the inverse image in $\overline \cK_\omega$ of an open set $V \subseteq \overline \cK_\ell$, with $\ell \geq m$ and $\xi_\ell \in V$.
  By \autoref{lem:NAP}\autoref{item:NAPStxAmalgamation} there exists an extension $\langle a^\xi_{<\ell} \rangle \subseteq \fC \in \cK$ and an embedding $\varphi_0 > h\colon \langle \bar b \rangle \rightarrow \fC$, and we may assume that $\fC = \langle \bar c \rangle$ where $\bar c = a^\xi_{<\ell},h\bar b$, so $\bar c \in \cK_{\ell + n}$.
  Let $\zeta \in \overline \cK_\omega$ be any such that $\zeta_{\ell + n} = [\bar c]$.
  Then $\zeta \in U \cap X_{\bar b,\varepsilon,\psi}$, as desired.
\end{proof}

\begin{thm}
  \label{thm:FraisseLimitBackAndForth}
  Let $\cK$ be a Fraïssé class, $\fM$ and $\fN$ separable $\cK$-structures, and let $\psi \in \Stx(\fM,\fN)$.
  \begin{enumerate}
  \item \label{item:FraisseLimitBackAndForthEmedding}
    If $\fN$ is a limit of $\cK$ then $\psi$ strictly coarsens an embedding $\theta\colon \fM \rightarrow \fN$.
  \item \label{item:FraisseLimitBackAndForthIsomorphism}
    If both $\fM$ and $\fN$ are limits of $\cK$ then $\psi$ strictly coarsens an isomorphism $\theta\colon \fM \cong \fN$.
  \end{enumerate}
  In particular (with $\psi = \infty$), the limit of $\cK$ is unique up to isomorphism.
\end{thm}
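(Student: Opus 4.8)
The plan is to run a back-and-forth construction directly at the level of strictly approximate isomorphisms, so that the argument reads essentially as in the discrete case, and then to extract the desired map as a pointwise limit. For \autoref{item:FraisseLimitBackAndForthEmedding} I would fix a dense sequence $(a_i)_{i \in \bN}$ in $M$ and reals $\varepsilon_n \downarrow 0$, and build a pointwise-decreasing chain $\psi > \psi_0 > \psi_1 > \cdots$ in $\Stx(\fM,\fN)$ as follows. At stage $n$, I apply the defining property of the limit (\autoref{dfn:FraisseLimit}) to the separable $\cK$-structure $\fM$, the finite set $\{a_0,\dots,a_n\}$, the current $\psi_n$, and the error $\varepsilon_{n+1}$, obtaining $\psi_{n+1} \in \Stx^{<\psi_n}(\fM,\fN)$ which is $\varepsilon_{n+1}$-total on $\{a_0,\dots,a_n\}$. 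The point of working with approximate isometries is that no bookkeeping of accumulated error is required: each $\psi_{n+1}$ \emph{strictly refines} $\psi_n$ rather than merely extending it up to error.

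I would then set $\theta = \inf_n \psi_n$, the pointwise limit of this decreasing sequence. It is again an approximate isometry, it lies in the closed set $\Apx(\fM,\fN)$, and since $\theta \le \psi_0 < \psi$, with strict refinement downward closed by \autoref{lem:ApproximateIsometryStrictCoarsening}\ref{item:ApproximateIsometryStrictCoarseningCriterion}, we get $\theta < \psi$. The totality conditions give $\inf_y \theta(a_i,y) = 0$ for each $i$; using completeness of $N$ together with the Katětov inequalities, this infimum is attained at a point $\theta a_i \in N$ with $\theta(a_i,\cdot) = d(\theta a_i,\cdot)$, and one checks $d(\theta a_i,\theta a_j) = d(a_i,a_j)$. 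Hence $\theta$ is the approximate isometry associated to a total isometry $M \to N$. Being a total partial isometry between $\cK$-structures which is also an approximate isomorphism, $\theta$ is a partial isomorphism by the first item of \autoref{rmk:FraisseClass}, i.e.\ an embedding $\fM \to \fN$ strictly coarsened by $\psi$.

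For \autoref{item:FraisseLimitBackAndForthIsomorphism} I would interleave a \emph{back} step, now available because $\fM$ is also a limit: at the appropriate stages I apply the limit property of $\fM$ to $\psi_n^* \in \Stx(\fN,\fM)$ and an initial segment of a fixed dense sequence $(b_j) \subseteq N$, then transpose, arranging that $\psi_{n+1}$ becomes $\varepsilon$-surjective on more and more of the $b_j$ (recall \autoref{dfn:ApproximateIsometryTotal}). In the limit $\theta$ is then onto the dense set $\{b_j\}$, hence onto $N$, so the embedding produced above is an isomorphism. Finally, taking $\psi = \infty$, which is a strictly approximate isomorphism by JEP (last item of \autoref{rmk:FraisseClass}), \autoref{item:FraisseLimitBackAndForthIsomorphism} shows that any two limits are isomorphic; combined with existence (\autoref{lem:FraisseLimitExists}) this yields the final sentence.

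The main obstacle I expect is not the back-and-forth scaffolding, which becomes routine once errors no longer accumulate, but the verification that the limiting object respects the language: pinning down the images $\theta a_i$ by the Cauchy argument above and, crucially, invoking \autoref{rmk:FraisseClass} to promote a mere total isometry that happens to be an approximate isomorphism into an honest structure embedding.
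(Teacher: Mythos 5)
Your proposal is correct and follows essentially the same route as the paper: a decreasing chain in $\Stx(\fM,\fN)$ obtained by alternately applying the limit property of $\fN$ (totality on $a_{<n}$) and, transposed via pseudo-inversion, of $\fM$ (surjectivity on $b_{<n}$), with the isomorphism extracted as the pointwise limit. The only difference is one of exposition: the paper states ``$\theta = \lim \theta_n$ is the desired isomorphism'' without further comment, whereas you spell out the verification (Cauchy sequences via the Katětov inequalities, closedness of $\Apx(\fM,\fN)$, and \autoref{rmk:FraisseClass} to upgrade the total isometry to an embedding), all of which is accurate.
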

\begin{proof}
  We only prove the second assertion, the first being similar and easier.
  Let $\{a_n\}$ and $\{b_n\}$ enumerate dense subsets of $\fM$ and $\fN$, respectively.
  We construct a decreasing sequence of $\theta_n \in \Stx(\fM,\fN)$, starting with $\theta_0 = \psi$.
  For even $n$ we choose $\theta_{n+1} \in \Stx^{<\theta_n}(\fM,\fN)$ which is $2^{-n}$-total on $a_{<n}$.
  For odd $n$ we similarly choose $\theta_{n+1} \in \Stx^{<\theta_n}(\fM,\fN)$, which is $2^{-n}$-surjective on $b_{<n}$ (i.e., $\theta_{n+1}^* \in \Stx^{<\theta_n^*}(\fN,\fM)$ which is $2^{-n}$-total on $b_{<n}$).
  Then $\theta = \lim \theta_n$ is the desired isomorphism.
\end{proof}

The unique limit of $\cK$ will be denoted by $\lim \cK$.
It can also be characterised in terms of actual maps.

\begin{cor}
  \label{cor:FraisseLimitMap}
  Let $\cK$ be a Fraïssé class and $\fM$ a separable $\cK$-structure.
  Then the following are equivalent:
  \begin{enumerate}
  \item The structure $\fM$ is a limit of $\cK$.
  \item \autoref{thm:FraisseLimitBackAndForth}\ref{item:FraisseLimitBackAndForthEmedding} holds: for every separable $\cK$-structure $\fB$ and $\psi \in \Stx(\fB,\fM)$, there is an embedding $f\colon \fB \rightarrow \fM$, $f < \psi$.
  \item For a separable $\cK$-structure $\fB$, finite tuple $\bar a \in B$, embedding $h \colon \langle \bar a \rangle \rightarrow \fM$ and $\varepsilon > 0$, there is an embedding $f\colon \fB \rightarrow \fM$ such that $d( f \bar a, h \bar a) < \varepsilon$.
    (Equivalently, we can take $h$ to be a finite partial isomorphism and $\bar a$ to enumerate $\dom h$.)
  \item \label{item:FraisseLimitMapDefinition}
    Same, where $\fB$ is finitely generated (i.e., $\fB \in \cK$).
  \end{enumerate}
\end{cor}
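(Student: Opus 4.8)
My plan is to establish the cycle (1)$\Rightarrow$(2)$\Rightarrow$(3)$\Rightarrow$(4)$\Rightarrow$(1). Two of these are essentially free. The implication (1)$\Rightarrow$(2) is just \autoref{thm:FraisseLimitBackAndForth}\ref{item:FraisseLimitBackAndForthEmedding} read with $\fM$ in the role of the limit target and $\fB$ in the role of the source: any $\psi\in\Stx(\fB,\fM)$ strictly coarsens an embedding $\fB\to\fM$. The implication (3)$\Rightarrow$(4) is trivial, since (4) is simply the restriction of (3) to finitely generated $\fB$. For the parenthetical remark in (3), I would note that an embedding $h\colon\langle\bar a\rangle\to\fM$ and the finite partial isomorphism $h\rest_{\bar a}\colon\fB\dashrightarrow\fM$ (with domain $\bar a$) carry the same information, so the two formulations coincide.

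For (2)$\Rightarrow$(3), the idea is to repackage the desired approximate constraint ``$\bar a\mapsto h\bar a$ to within $\varepsilon$'' as a single strictly approximate isomorphism and hand it to (2). Set $g=h\rest_{\bar a}$, a finite partial isomorphism $\fB\dashrightarrow\fM$, which by \autoref{lem:ApproximateIsomorphism} is an approximate isomorphism, and put $\psi=g+\delta$ for some $0<\delta<\varepsilon$. Since the window $\bar a\times h\bar a$ already records all of $g$, one checks $g\rest_{\bar a\times h\bar a}=g$, whence $\psi\geq g\rest_{\bar a\times h\bar a}+\delta$ and \autoref{lem:ApproximateIsometryStrictCoarsening}\ref{item:ApproximateIsometryStrictCoarseningCriterion} gives $g<\psi$; in particular $\psi\in\Stx(\fB,\fM)$. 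Applying (2) yields an embedding $f<\psi$, and evaluating at $(a_i,ha_i)$, where $g(a_i,ha_i)=0$, gives $d(fa_i,ha_i)=f(a_i,ha_i)\leq\psi(a_i,ha_i)=\delta<\varepsilon$.

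The substance is in (4)$\Rightarrow$(1), which I would run through \autoref{lem:FraisseLimitCriterion}: it suffices, given $\bar b\in\cK_{n,0}$, $\psi\in\Stx(\bar b,\fM)$ and $\varepsilon>0$, to produce $\varphi\in\Apx^{\leq\psi}(\bar b,\fM)$ that is $\varepsilon$-total on $\bar b$. First apply \autoref{lem:NAP}\ref{item:NAPStxToEmbedding} with $\langle\bar b\rangle\in\cK$ to obtain an extension $\langle\bar b\rangle\subseteq\fC\in\cK$ and a finite partial isomorphism $g\colon\fC\dashrightarrow\fM$ with $g<\psi$ (trivially extending $\psi$ to $\fC\rightsquigarrow\fM$). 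Let $\bar c$ enumerate $\dom g$, so $g\rest_{\bar c}$ extends to an embedding $\langle\bar c\rangle\to\fM$; feeding $\fC\in\cK$ and this data to (4) produces a total embedding $F\colon\fC\to\fM$ with $d(F\bar c,g\bar c)<\delta$. A one-line estimate then gives $F\leq g+\delta$ as approximate isometries $\fC\rightsquigarrow\fM$. I would take $\varphi=F\rest_{\bar b}$: as the restriction of a total embedding it is $r$-total for every $r>0$, hence $\varepsilon$-total, and on the window $\bar b\times M$ it coincides with $F$, so the inequality $\varphi\leq\psi$ reduces to $F\leq\psi$.

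The main obstacle is precisely this last inequality $F\leq\psi$: the embedding $F$ only approximates $g$ on the finite set $\bar c$, yet it must lie below $\psi$ \emph{everywhere}. The device that makes it work is that strictness of $g<\psi$ supplies a \emph{uniform} additive margin: by \autoref{lem:ApproximateIsometryStrictCoarsening}\ref{item:ApproximateIsometryStrictCoarseningCriterion} there are a finite window $X_0\times Y_0$ and $\eta>0$ with $\psi\geq g\rest_{X_0\times Y_0}+\eta$, and since trivially re-extending a restriction only coarsens ($g\rest_{X_0\times Y_0}\geq g$) this improves to $\psi\geq g+\eta$ globally. Choosing $\delta\leq\eta$ then yields $F\leq g+\delta\leq\psi$. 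Everything else is routine manipulation of approximate isometries and their trivial extensions.
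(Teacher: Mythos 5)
Your proposal is correct and follows essentially the same route as the paper: the cycle (1)$\Rightarrow$(2)$\Rightarrow$(3)$\Rightarrow$(4)$\Rightarrow$(1), with (1)$\Rightarrow$(2) quoted from \autoref{thm:FraisseLimitBackAndForth}, the middle implications trivial, and (4)$\Rightarrow$(1) via \autoref{lem:NAP}\ref{item:NAPStxToEmbedding} together with the uniform additive margin supplied by strict coarsening ($h + \varepsilon < \psi$ in the paper, $\psi \geq g + \eta$ in yours). The only cosmetic difference is that you verify the limit condition explicitly through \autoref{lem:FraisseLimitCriterion}, whereas the paper checks the definition of limit directly for $\fB \in \cK$, leaving the reduction to finitely generated structures implicit.
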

\begin{proof}
  \begin{cycprf}
  \item
    By \autoref{thm:FraisseLimitBackAndForth}\ref{item:FraisseLimitBackAndForthEmedding}.
  \item[\impnnext]
    Clear.
  \item[\impfirst]
    Let $\fB \in \cK$ and $\psi \in \Stx(\fB,\fM)$.
    By \autoref{lem:NAP}\autoref{item:NAPStxToEmbedding}, possibly increasing $\fB$ we may assume there is a finite partial isomorphism $h\colon \fB \dashrightarrow \fM$ such that $h < \psi$, and so $h + \varepsilon < \psi$ for some $\varepsilon > 0$.
    By hypothesis we obtain $f\colon \fB \rightarrow \fM$ such that $d(fa,ha) < \varepsilon$ for $a \in \dom h$, or $f < h + \varepsilon$.
    In particular, $f$ is total and $f < \psi$.
    Thus $\fM$ is a limit.
  \end{cycprf}
\end{proof}

\begin{thm}
  \label{thm:FraisseClass}
  Let $\cK$ be a class of finitely generated structures.
  Then the following are equivalent:
  \begin{enumerate}
  \item The class $\cK$ is a Fraïssé class.
  \item The class $\cK$ is the age of a separable approximately ultra-homogeneous structure $\fM$.
  \end{enumerate}
  Moreover, such a structure $\fM$ is necessarily a limit of $\cK$, and thus unique up to isomorphism and universal for separable $\cK$-structures.
\end{thm}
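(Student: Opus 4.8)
The plan is to prove the two implications separately — using the limit already constructed for $(1)\Rightarrow(2)$, and a direct computation of $d^\cK$ through automorphism orbits for $(2)\Rightarrow(1)$ — and then to read off the ``moreover'' clause from \autoref{cor:FraisseLimitMap}.

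For $(1)\Rightarrow(2)$, assume $\cK$ is a Fraïssé class, so by \autoref{lem:FraisseLimitExists} it has a separable limit $\fM$, which is a $\cK$-structure, giving $\Age(\fM)\subseteq\cK$ at once. For the reverse inclusion and for universality I would take any separable $\cK$-structure $\fB$ (in particular any $\fB\in\cK$) and apply the second clause of \autoref{cor:FraisseLimitMap} to the empty approximate isomorphism $\psi=\infty$, which lies in $\Stx(\fB,\fM)$ by JEP (\autoref{rmk:FraisseClass}); this produces an embedding $\fB\hookrightarrow\fM$, whence $\cK\subseteq\Age(\fM)$ and universality. To obtain approximate ultra-homogeneity I take a finite partial isomorphism $\varphi\colon\fM\dashrightarrow\fM$ and $\varepsilon>0$: by \autoref{lem:ApproximateIsomorphism} $\varphi$ is an approximate isomorphism and $\varphi<\varphi+\varepsilon$, so $\varphi+\varepsilon\in\Stx(\fM,\fM)$, and \autoref{thm:FraisseLimitBackAndForth}\ref{item:FraisseLimitBackAndForthIsomorphism} with $\fN=\fM$ yields an automorphism $\theta\le\varphi+\varepsilon$; since $\varphi(a,\varphi a)=0$ this gives $d(\theta a,\varphi a)\le\varepsilon$ on $\dom\varphi$, which is exactly \autoref{dfn:ApproximatelyUltraHomogeneous}.

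For $(2)\Rightarrow(1)$, let $\fM$ be separable and approximately ultra-homogeneous with $\Age(\fM)=\cK$. HP is automatic for an age, and JEP and NAP follow from homogeneity as remarked after \autoref{dfn:HP}: one amalgamates two members inside $\fM$, transporting the prescribed finite partial isomorphism by an approximating automorphism and taking the finitely generated substructure they generate. The technical core is the identity
\begin{gather*}
  d^\cK([\bar m],[\bar m'])=\inf_{f\in\Aut(\fM)}d(f\bar m,\bar m'),\qquad \bar m,\bar m'\in M^n,
\end{gather*}
together with the fact that $\bar m\mapsto[\bar m]$ maps $M^n$ onto $\overline\cK_n$ (surjective because every member of $\cK$ embeds in $\fM$). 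The inequality $\le$ is immediate, as each automorphism is a common embedding; for $\ge$, homogeneity enters, since any common embedding of $\langle\bar m\rangle$ and $\langle\bar m'\rangle$ into some $\fC\in\cK$ can be pushed into $\fM$ and its two legs approximated by automorphisms, so the witnessed distance is realised, up to arbitrarily small error, by a single automorphism. Granting this, WPP (separability of each $\overline\cK_n$) follows from separability of $\fM$, and CP follows because $d^\cK$-convergence $[\bar m_k]\to[\bar m]$ lets me choose $f_k\in\Aut(\fM)$ with $f_k\bar m_k\to\bar m$ in $M^n$, so each continuous and automorphism-invariant interpretation $P^\fM$ (and similarly each function symbol) descends to a continuous map on $\overline\cK_n$.

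The hard part will be completeness in PP, the one place that genuinely uses completeness of $M$. I would take a $d^\cK$-Cauchy sequence in $\overline\cK_n$, pass to a rapidly converging subsequence, and use the orbit-distance identity to replace representatives successively by automorphic images so as to manufacture an honestly Cauchy sequence in $M^n$; its limit exists since $M$ is complete and projects to the required limit in $\overline\cK_n$. This establishes that $\cK$ is a Fraïssé class. Finally, for the ``moreover'' clause I verify criterion \autoref{cor:FraisseLimitMap}\ref{item:FraisseLimitMapDefinition}: given $\fB\in\cK$, a finite tuple $\bar a$, an embedding $h\colon\langle\bar a\rangle\to\fM$ and $\varepsilon>0$, I embed $\fB$ into $\fM$ by some $\beta$ (as $\fB\in\Age(\fM)$), observe that $\beta\bar a\mapsto h\bar a$ is a finite partial isomorphism of $\fM$, approximate it by $g\in\Aut(\fM)$, and set $f=g\beta$, giving $d(f\bar a,h\bar a)<\varepsilon$. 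Thus $\fM$ is a limit, hence unique up to isomorphism by \autoref{thm:FraisseLimitBackAndForth} and universal as in the second paragraph.
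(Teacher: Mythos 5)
Your proposal is correct and takes essentially the same route as the paper: (1)$\Rightarrow$(2) via \autoref{lem:FraisseLimitExists}, \autoref{thm:FraisseLimitBackAndForth} and \autoref{cor:FraisseLimitMap}, and the converse direction together with the moreover clause by direct verification from approximate ultra-homogeneity. The only difference is one of detail: the paper declares (2)$\Rightarrow$(1) and the moreover part ``clear'', whereas you spell them out (HP/JEP/NAP from homogeneity, the orbit identity $d^\cK([\bar m],[\bar m']) = \inf_{f \in \Aut(\fM)} d(f\bar m, \bar m')$ yielding WPP, CP, and completeness of $d^\cK$ via automorphic adjustment of representatives in $M^n$), which is precisely the verification the paper leaves implicit.
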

\begin{proof}
  The second item clearly implies the first, as well as the moreover part.
  Conversely, if $\cK$ is a Fraïssé class then by \autoref{lem:FraisseLimitExists} it has a limit $\fM$.
  By \autoref{thm:FraisseLimitBackAndForth}\ref{item:FraisseLimitBackAndForthEmedding} we have $\Age(\fM) = \cK$, and homogeneity follows from \autoref{thm:FraisseLimitBackAndForth}\ref{item:FraisseLimitBackAndForthIsomorphism}.
\end{proof}

\begin{rmk}
  \label{rmk:FraisseClassLanguage}
  Let $\cK$ be a Fraïssé class, and let $\theta\colon [0,\infty] \rightarrow [0,1]$ be any increasing sub-additive map which is continuous and injective near zero.
  For example, plain truncation $x \mapsto x \wedge 1$ will do, or if one wants a homeomorphism, one may take $x \mapsto 1 - e^{-x}$ or $x \mapsto \frac{x}{x + 1}$.
  The important point is that for any distance function $d$, $\theta d$ is a bounded distance function, uniformly equivalent to $d$.

  We define a new language $\cL_\cK$, consisting of one $n$-ary predicate symbol $P_{[\bar a]}$ for each equivalence class $[\bar a]$ in $\overline \cK_n$ (or in a dense subset thereof).
  Then every $\cK$-structure $\fA$ gives rise to an $\cL_\cK$-structure $\fA'$, with the same underlying set, where
  \begin{gather*}
    d^{\fA'} = \theta d^\fA, \qquad P_{[\bar a]}^\fA(\bar b) = \theta d^\cK(\bar a,\bar b).
  \end{gather*}

  Let $\cK' = \bigcup_{\fA \in \cK} \Age(\fA')$.
  Since $\cL'$ is purely relational, all members of $\cK'$ are necessarily finite, while members of $\cK$ are merely finitely generated, and in general $\cK' \neq \{\fA'\colon \fA \in \cK\}$.
  However, for each $n$ we do have canonical identification between $\cK_n$ and $\cK'_n$, with $d^{\cK'} = \theta d^\cK$.
  Then one checks that $\cK'$ is a Fraïssé class, and that a $\cK$-structure $\fM$ is a limit of $\cK$ if and only if $\fM'$ is a limit of $\cK'$.

  We conclude that up to a change of language, any Fraïssé class or approximately ultra-homogeneous structure can be assume to be in a $1$-Lipschitz, $[0,1]$-valued relational continuous language, and that our more relaxed definitions (see \autoref{rmk:Continuity}), while convenient for some concrete examples, do not in truth add any more generality.

  Another curious property of this construction is that $(\lim \cK)' = \lim \cK'$ is always an atomic model of its continuous first order theory (since all distances to types are definable), and therefore a prime model.
\end{rmk}

Notice that in \autoref{rmk:FraisseClassLanguage} all isolated types are isolated by quantifier-free formulae, but non-isolated types need not be determined by they quantifier-free restriction, so the theory need not eliminate quantifiers.

\section{Examples of metric Fraïssé classes}
\label{sec:Examples}

\subsection{Standard examples}

Let $\cK_M$ be the class of finite metric spaces; $\cK_{M,1}$ the class of finite metric spaces of diameter at most one; $\cK_H$ the class of finite dimensional Hilbert spaces; and $\cK_P$ the class of finite probability algebra, each in the appropriate language.
We leave it to the reader to check that these are all Fraïssé classes.
We claim that the Urysohn space, the Urysohn sphere, $\ell^2$, and the (probability algebra of the) Lebesgue space $([0,1],\lambda)$, are, respectively, limits of these classes.
In fact, in each of these cases, the limits satisfy a strong version of \autoref{cor:FraisseLimitMap}\ref{item:FraisseLimitMapDefinition}:
\begin{quote}
  For each extension $\fA \subseteq \fB$ of members of $\cK$, every embedding $\fA \rightarrow \fM$ extends to an embedding $\fB \rightarrow \fM$.
\end{quote}

\subsection{An incomplete example}

Fix $1 \leq p < \infty$, and let $\cK$ be the class of (real) atomic $L^p$ lattices with finitely many (see \cite{MeyerNieberg:BanachLattices} for a formal definition and \cite{BenYaacov-Berenstein-Henson:LpBanachLattices} for a model-theoretic treatment).

Then $\cK$ is \emph{not} a Fraïssé class, since it is incomplete (this is in contrast with the class of finite probability algebras, which are all atomic, and do form a complete class).
Indeed, working inside $E = L^p[0,1]$, let $f(x) = 1$ and $g(x) = x$.
Then on the one hand, $E = \langle f,g \rangle$ is non atomic, while on the other hand, approximating $g$ by step functions, the pair $(f,g)$ can be arbitrarily well approximated by pairs which do generate an atomic lattice.

The class $\cK$ is an incomplete Fraïssé class, though, and its completion is the class of \emph{all} separable $L^p$ lattices, whose limit is the unique separable atomless $L^p$ lattice.
This is somewhat uninteresting, since the limit already belongs to $\cK$.

Alternatively, one could add structure to atomic $L^p$ lattices making embeddings preserve atoms.
With this added structure, the class of $L^p$ lattices over finitely many atoms is a Fraïssé class, with limit the unique atomic $L^p$ with $\aleph_0$ atoms.
The automorphism group of the latter is $S_\infty$, the permutation group of $\bN$, so in a sense this fails to produce something truly new.

\subsection{The Gurarij space}

We recall that

\begin{dfn}
  \label{dfn:Gurarij}
  A \emph{Gurarij space} is a separable Banach space $\bG$ having the property that for any $\varepsilon > 0$, finite dimensional Banach space $E \subseteq F$, and isometric embedding $\psi\colon E \rightarrow \bG$, there is a linear embedding $\varphi\colon F \rightarrow \bG$ extending $\psi$ such that in addition, for all $x \in F$, $(1 - \varepsilon)\|x\| < \|\varphi x\| < (1 + \varepsilon)\|x\|$.
\end{dfn}

Gurarij \cite{Gurarij:UniversalPlacement} proved the existence and almost isometric uniqueness of such spaces, while actual (i.e., isometric) uniqueness of $\bG$ was shown by Lusky \cite{Lusky:UniqueGurarij}.
This uniqueness was more recently re-proved by Kubiś and Solecki \cite{Kubis-Solecki:GurariiUniqueness}, in what essentially amounts to showing that it was the Fraïssé limit of the class of all finite dimensional Banach spaces, an observation we now have the tools to state and prove formally.
From here on, $\cK = \cK_B$ is the class of finite dimensional Banach space.
Then this is a Fraïssé class.
In particular, it is separable since a separable universal Banach space exists.

Let us also recall the following fact, hitherto unpublished, due to Henson:

\begin{fct}[See also \cite{BenYaacov-Henson:Gurarij}]
  \label{fct:BanachSpaceDistance}
  Let $\bar a, \bar b \in \cK_n$.
  Then
  \begin{gather}
    \label{eq:BanachTypeDistance}
    d^\cK(\bar a,\bar b) = \sup_{\sum |s_i| = 1} \left| \bigl\| \sum s_i a_i \bigr\| - \bigl\| \sum s_i b_i \bigr\| \right|.
  \end{gather}
\end{fct}
\begin{proof}
  The inequality $\geq$ is clear.
  For $\leq$,
  let $r$ denote the right hand side of \autoref{eq:BanachTypeDistance}.
  Let $E = \langle \bar a \rangle \oplus \langle \bar b \rangle$ in the category of vector spaces over $\bR$, and for $x \in \langle \bar a \rangle$, $y \in \langle \bar b \rangle$ define:
  \begin{gather*}
    \| x - y \|' = \inf_{\bar s} \left\| x - \sum s_i a_i \right\|^{\langle \bar a \rangle} + \left\| y - \sum s_i b_i \right\|^{\langle \bar b \rangle} + r \sum |s_i|.
  \end{gather*}
  This is clearly a semi-norm on $E$, and $\|a_i - b_i\|' \leq r$.
  For $x \in \langle \bar a \rangle$ we have $\|x\|' \leq \|x\|^{\langle \bar a \rangle}$, while on the other hand, for any $\bar s$ we have by choice of $r$:
  \begin{align*}
    \|x\|^{\langle \bar a \rangle}
    &
    \leq \left\| x - \sum s_i a_i \right\|^{\langle \bar a \rangle} + \left\| \sum s_i a_i \right\|^{\langle \bar a \rangle}
    \\ &
    \leq \left\| x - \sum s_i a_i \right\|^{\langle \bar a \rangle} + \left\| \sum s_i b_i \right\|^{\langle \bar b \rangle} + r \sum |s_i|.
  \end{align*}
  It follows that $\|x\|' = \|x\|^{\langle \bar a \rangle}$, and similarly for $y \in \langle \bar b \rangle$, whence the desired amalgam.
\end{proof}

\begin{thm}
  \label{thm:GurarijExistsUnique}
  A Banach space $G$ is a Gurarij space if and only if it is the Fraïssé limit of the class of all finite dimensional Banach space.
  In particular, the Gurarij space exists, is unique, and is universal for separable Banach spaces.
\end{thm}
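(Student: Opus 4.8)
The plan is to deduce everything from the Fraïssé machinery of \autoref{sec:Fraisse} once I establish that, for $\cK = \cK_B$ the class of finite-dimensional Banach spaces, a separable Banach space is a Gurarij space \emph{precisely when} it is a limit of $\cK_B$. Granting that $\cK_B$ is a Fraïssé class, \autoref{lem:FraisseLimitExists} produces a limit $\bG_0 = \lim \cK_B$, which by \autoref{thm:FraisseLimitBackAndForth} is unique up to isometry and, by \autoref{thm:FraisseClass}, is separable, approximately ultra-homogeneous and universal for separable $\cK_B$-structures (that is, for separable Banach spaces). With the advertised equivalence in hand, the forward direction (limit $\Rightarrow$ Gurarij) applied to $\bG_0$ yields \emph{existence} of a Gurarij space; the reverse direction (Gurarij $\Rightarrow$ limit) shows any Gurarij space is a limit, hence isometric to $\bG_0$ by uniqueness of the limit, giving \emph{uniqueness}; and \emph{universality} is inherited from $\bG_0$. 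So it remains to prove the two implications, which I would phrase against the ``weak'' characterisation of a limit in \autoref{cor:FraisseLimitMap}\ref{item:FraisseLimitMapDefinition}.

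For \emph{limit $\Rightarrow$ Gurarij}, fix $\varepsilon > 0$, finite-dimensional $E \subseteq F$ and an isometric embedding $\psi\colon E \to \fM$, where $\fM$ is a limit. Choosing an Auerbach basis $\bar a$ of $E$ and applying \autoref{cor:FraisseLimitMap}\ref{item:FraisseLimitMapDefinition} with $\fB = F$ and $h = \psi$, I obtain, for any prescribed $\delta > 0$, an isometric embedding $f\colon F \to \fM$ with $\|f a_i - \psi a_i\| < \delta$. The point is then to \emph{correct $f$ into an exact extension of $\psi$ at the price of being only almost isometric}: fixing any linear projection $\pi\colon F \to E$ and setting $\varphi = f + (\psi - f\rest_E)\circ \pi$, one has $\varphi\rest_E = \psi$ exactly, while the Auerbach basis gives $\|\psi - f\rest_E\|_{\mathrm{op}} \le (\dim E)\,\delta$, so $\|\varphi - f\|_{\mathrm{op}} \le (\dim E)\|\pi\|\,\delta$. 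Since $f$ is isometric, this bounds $\bigl|\,\|\varphi x\| - \|x\|\,\bigr|$ by $(\dim E)\|\pi\|\,\delta\,\|x\|$ on all of $F$; taking $\delta$ small enough forces $(1-\varepsilon)\|x\| < \|\varphi x\| < (1+\varepsilon)\|x\|$ together with injectivity, exhibiting $\fM$ as a Gurarij space.

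For \emph{Gurarij $\Rightarrow$ limit} I would verify \autoref{cor:FraisseLimitMap}\ref{item:FraisseLimitMapDefinition} directly: given $F \in \cK_B$, an Auerbach basis $\bar a$ of a subspace $E$, an isometric $\psi\colon E \to \bG$ and $\varepsilon > 0$, I must produce an \emph{exact} isometric embedding $f\colon F \to \bG$ with $\|f\bar a - \psi\bar a\| < \varepsilon$. The Gurarij property only supplies almost-isometric extensions, so the crux is an iteration turning ``almost isometric'' into ``isometric''. I would build linear maps $f_n\colon F \to \bG$ with $f_0$ an almost-isometric extension of $\psi$ (exact on $E$, error $\delta_0$) and pass from $f_n$ to $f_{n+1}$ as follows. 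By \autoref{fct:BanachSpaceDistance}, the $(1\pm\delta_n)$-isometry $f_n$ gives $d^\cK(\bar a, f_n\bar a) \le \delta_n \max_i\|a_i\|$, so its proof furnishes an amalgam $C_n \in \cK_B$ containing isometric copies of $F$ and of $G_n := f_n(F) \subseteq \bG$ with $\|a_i - f_n a_i\|_{C_n}$ no larger; applying the Gurarij property to $G_n \subseteq C_n$ and the inclusion $G_n \hookrightarrow \bG$ with a fresh, much smaller error $\delta_{n+1}$ yields an almost-isometric $\Phi_n\colon C_n \to \bG$ fixing $G_n$, and I set $f_{n+1} = \Phi_n\rest_F$. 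Then $f_{n+1}$ is $(1\pm\delta_{n+1})$-isometric while $\|f_{n+1} - f_n\|_{\mathrm{op}} \le C_F\,\delta_n$ for a constant $C_F$ depending only on $F$; choosing $\sum_n \delta_n$ below $\varepsilon/C_F$ with $\delta_n \to 0$ makes $(f_n)$ Cauchy in operator norm, so $f = \lim_n f_n$ is a genuine isometric embedding (as $\delta_n \to 0$) with $\|f\rest_E - \psi\| \le \sum_n \|f_{n+1}-f_n\|_{\mathrm{op}} < \varepsilon$, as required.

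The main obstacle is precisely this last iteration. The two notions are formally dual: the Gurarij property prescribes an \emph{exact} extension that is only \emph{almost} isometric, whereas \autoref{cor:FraisseLimitMap}\ref{item:FraisseLimitMapDefinition} prescribes an \emph{exact} isometric embedding that only \emph{almost} extends the given map, and the passage from the former to the latter is the single genuine limit construction, driving the almost-isometry error to zero while keeping the successive maps summably close. All the supporting ingredients (the amalgam of \autoref{fct:BanachSpaceDistance}, composition and compactness of approximate isometries, and uniqueness of the limit from \autoref{thm:FraisseLimitBackAndForth}) are already in place, so the remaining work is the bookkeeping of the two perturbation estimates sketched above.
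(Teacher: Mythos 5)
Your proposal is correct, and for the direction \emph{limit $\Rightarrow$ Gurarij} it coincides with the paper's argument: both use \autoref{cor:FraisseLimitMap} to get an isometric $f\colon F \rightarrow G$ nearly agreeing with $\psi$ on a basis of $E$, then correct $f$ into an exact extension of $\psi$ at the cost of a norm perturbation of order $(\dim E)\delta$ (the paper redefines $f$ on the basis vectors, which is precisely your projection correction with $\pi$ taken along the complementary basis vectors). The genuine divergence is in the converse. The paper never constructs an exact isometric embedding at all: it verifies the \emph{weak} definition of limit (\autoref{dfn:FraisseLimit}) directly, using \autoref{lem:NAP}\ref{item:NAPStxToEmbedding} to replace $\psi \in \Stx(\bar b,G)$ by a finite partial isomorphism $\psi'$, a \emph{single} application of the Gurarij property to get an almost-isometric $\varphi\colon F \rightarrow G$ extending $\psi'$, and then Henson's formula (\autoref{fct:BanachSpaceDistance}) to convert the bounds on $\|\varphi\|, \|\varphi^{-1}\|$ into $d^\cK\bigl(\bar b \bar c, \varphi(\bar b\bar c)\bigr) < \varepsilon$, which immediately yields an $\varepsilon$-total approximate isomorphism strictly refining $\psi$ --- no iteration and no limit construction. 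You instead verify the \emph{strong} characterisation \autoref{cor:FraisseLimitMap}\ref{item:FraisseLimitMapDefinition} by the classical successive-approximation scheme (amalgamate via \autoref{fct:BanachSpaceDistance}, reapply the Gurarij property with summable errors $\delta_n$, pass to the operator-norm limit), which is essentially the Kubiś--Solecki argument. Your iteration is sound --- the estimate $\|f_{n+1} - f_n\|_{\mathrm{op}} \leq C_F \delta_n$ with $C_F$ of order $\dim F$ does close up --- though note one slip to repair: the amalgam $C_n$ must be built from a basis $\bar b$ of all of $F$ together with its image $f_n \bar b$ (so $d^\cK(\bar b, f_n\bar b) \leq \delta_n$ is the relevant bound), not from $\bar a$ alone, or else $C_n$ need not contain copies of $F$ and $G_n$. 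The trade-off: your route is self-contained Banach-space analysis, but it redoes by hand exactly the limit construction that the paper's machinery was designed to subsume once and for all into the back-and-forth argument of \autoref{thm:FraisseLimitBackAndForth}; the paper's route is shorter and is precisely its advertised illustration that, with approximate isometries, being a Fraïssé limit need only ever be checked in the formally weaker form.
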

\begin{proof}
  Assume first that $G = \lim \cK$.
  Let $E \subseteq F$ be two finite dimensional Banach spaces, with bases $\bar a \subseteq \bar b$, respectively, and let $\psi\colon E \rightarrow G$ be an isometric embedding.
  By \autoref{cor:FraisseLimitMap} there exists an isometric $\varphi'\colon F \rightarrow G$ with $d(\bar a,\varphi\bar a) = \delta$ arbitrarily small.
  Define $\varphi\colon F \rightarrow G$ as $\psi$ on $\bar a$ and $\varphi'$ on $\bar b \setminus \bar a$.
  Taking $\delta$ sufficiently small, $\varphi$ is injective, and both $\|\varphi\|$ and $\|\varphi^{-1}\|$ (with $\varphi$ restricted to its image) arbitrarily close to one, so $G$ is Gurarij.

  Conversely, assume that $G$ is Gurarij, and let $F = \langle \bar b \rangle \in \cK$, $\psi \in \Stx(\bar b,G)$ and $\varepsilon > 0$ be given.
  By \autoref{lem:NAP}\autoref{item:NAPStxToEmbedding}, possibly extending $F$ and decreasing $\varepsilon$ we may assume that there are a finite tuple $\bar c \in F^m$ and an isometric embedding $\psi'\colon \langle \bar c \rangle \rightarrow G$ such that $\psi \geq \psi'\rest_{\bar c} + \varepsilon$.
  By assumption there exists a linear $\varphi\colon F \rightarrow G$ extending $\psi'$, with $\|\varphi\|, \|\varphi^{-1}\|$ arbitrarily close to one.
  By \autoref{fct:BanachSpaceDistance} we can then have $d^\cK\bigl( \bar b\bar c,\varphi(\bar b\bar c) \bigr) < \varepsilon$.
  Then there exists $\varphi' \in \Apx\bigl( \bar b\bar c,\varphi(\bar b\bar c) \bigr) \subseteq \Apx(F,G)$ with $\varphi'(b_i,\varphi b_i) < \varepsilon$, $\varphi'(c_j,\psi' c_j) < \varepsilon$.
  This $\varphi'$ is $\varepsilon$-total on $\bar b$ and $\psi \geq \psi'\rest_{\bar c} + \varepsilon > \varphi'\rest_{\bar c} \geq \varphi'$, so $G$ is a limit.
\end{proof}

\bibliographystyle{begnac}
\bibliography{begnac}

\end{document}